\newcommand{\colorone}{red}
\newcommand{\R}{\mathbb{R}}
\newcommand{\E}[1]{\mathbb{E}{ \{ #1 \}}}
\newcommand{\EX}[1]{\mathbb{E}_X{ \{ #1 \}}}
\newcommand{\QED}{\hspace*{\fill}\rule{2.5mm}{2.5mm}}
\newtheorem{theorem}{Theorem}[section]
\newenvironment{proof}{\noindent{\bf Proof\ }}{\QED\\}
\newtheorem{lemma}{Lemma}[section]
\newtheorem{corollary}{Corollary}[section]
\newcommand{\logitinv}{\mbox{logit}^{-1}}
\newcommand{\cov}{\mbox{cov}}
\newcommand{\var}[1]{\mbox{var}\{ #1 \}}
\newcommand{\cor}{\mbox{cor}}
\newcommand{\sd}{\mbox{sd}}
\newcommand*{\pd}[3][]{\ensuremath{\frac{\partial^{#1} #2}{\partial #3}}}
\newcommand{\argmin}{\arg\!\min}
\newcommand{\ybart}{\bar{Y}_t}
\newcommand{\ybarc}{\bar{Y}_c}
\newcommand{\zbart}{\bar{Z}_t}
\newcommand{\zbarc}{\bar{Z}_c}
\newcommand{\hbartzero}{\bar{h}_t(0)}
\newcommand{\hbarczero}{\bar{h}_c(0)}
\newcommand{\hbartone}{\bar{h}_t(1)}
\newcommand{\hbarcone}{\bar{h}_c(1)}
\newcommand{\hbartw}{\bar{h}_t(w)}
\newcommand{\hbarcw}{\bar{h}_c(w)}
\newcommand{\fbart}{\bar{f}_t}
\newcommand{\fbarc}{\bar{f}_c}
\newcommand{\fbaryt}{\bar{f}_{Y, t}}
\newcommand{\fbaryc}{\bar{f}_{Y, c}}
\newcommand{\fbarzt}{\bar{f}_{Z, t}}
\newcommand{\fbarzc}{\bar{f}_{Z, c}}
\newcommand{\taua}{\hat{\tau}_{adj}}
\newcommand{\ystar}{Y^{\star}}
\newcommand{\zstar}{Z^{\star}}
\newcommand{\hstar}{H^{\star}}
\newcommand{\sigmah}{\sigma_{H}}
\newcommand{\sigmay}{\sigma_{Y}}
\newcommand{\sigmahstar}{\sigma_{H^{\star}}}
\newcommand{\sigmaystar}{\sigma_{Y^{\star}}}
\newcommand{\muh}{\mu_{H}}
\newcommand{\muy}{\mu_{Y}}
\newcommand{\muhstar}{\mu_{H^{\star}}}
\newcommand{\muystar}{\mu_{Y^{\star}}}
\newcommand{\mustar}{\mu^{\star}}
\newcommand{\muz}{\mu_{Z}}
\newcommand{\muzstar}{\mu_{Z^{\star}}}
\newcommand{\muhy}{\mu_{H_Y}}
\newcommand{\muhystar}{\mu_{H_Y^{\star}}}
\newcommand{\muhz}{\mu_{H_Z}}
\newcommand{\muhzstar}{\mu_{H_Z^{\star}}}
\newcommand{\rhostar}{\rho^{\star}}
\newcommand{\covyh}{\cov(Y, H)}
\newcommand{\covystarhstar}{\cov(Y^{\star}, H^{\star})}
\newcommand{\sigmahc}{\sigma_{H_c}}
\newcommand{\sigmaht}{\sigma_{H_t}}
\newcommand{\muhc}{\mu_{H_c}}
\newcommand{\muht}{\mu_{H_t}}
\newcommand{\muyc}{\mu_{Y_c}}
\newcommand{\muyt}{\mu_{Y_t}}
\newcommand{\sigmayc}{\sigma_{Y_c}}
\newcommand{\sigmayt}{\sigma_{Y_t}}
\begin{document}

	\begin{center}
		{\Large \bf Unbiased variance reduction in randomized experiments}\\
		\vspace{0.1cm}

	{Reza Hosseini, Amir Najmi}\\
	{Google Inc.}\\
	{\small Address: 1600 Amphitheatre Parkway, Mountain View, CA, 94043, USA}\\
	{email: reza1317@gmail.com, amir@google.com}

	\end{center}

\begin{abstract}
	This paper develops a flexible method for decreasing the
	variance of estimators for
	complex experiment effect metrics (e.g.\;ratio metrics)
	while retaining asymptotic unbiasedness. This method uses
	the auxiliary information
 about the experiment units to decrease the variance.
	The method can incorporate almost any arbitrary predictive model
	(e.g.\;linear regression, regularization, neural networks) to adjust the estimators.
	The adjustment involves some free parameters which can be optimized to achieve
	the smallest variance reduction
	given the predictive model performance.
	Also we approximate the achievable
	reduction in variance in fairly general settings mathematically.
	Finally, we use simulations to show the method works.
\end{abstract}

\section{Introduction}

Online experiments play a pivotal role in decision making for many technology
companies and are widely used across the industry. From a business perspective,
decision makers like to have the fastest results, with the smallest sample of
impacted users. Therefore recently there has been some effort
across the industry to develop experiment effect estimation methods which reduce
the variance e.g.\;\cite{deng-2013}.

Most variance reduction methods come with the price of
introducing bias (which persists even asymptotically)
or making restrictive modeling assumptions. One exception is
discussed in Chapter 7 of \cite{imbens-2015} among other references:
If one defines
the experiment effect to be the simple expected difference
between treatment (denoted by 1) and control (denoted by zero):
\begin{equation}
 	\tau = \E{Y(1)} - \E{Y(0)},
\label{eqn-simple-diff}
\end{equation}
 and $X$ are some auxiliary variables which are
 independent of the assignment procedure
 (assignment of units to control and treatment),
 then the estimated parameter for $\tau_0$ from the linear regression equation
\[Y = \alpha + \tau_{0} W + X \beta + \epsilon,\] where $W=1$ if the unit is in
treatment and zero otherwise, is an asymptotically unbiased estimator of $\tau$
with known asymptotic variance. The proof of this result depends heavily on the
squared loss function properties as well as the simplicity of the experiment effect
considered as apposed to more complex metrics e.g. the ratio metric:
$\tau = \E{Y(1)} / \E{Y(0)}$. However in most cases in the technology industry,
the metrics of interest are ratios rather than simple differences.
(One reason for that being it is easier to think about growth as a percent increase.)
Another limitation of this approach is estimating linear regression coefficients
could be difficult when dealing with a large number of weak predictors which would
have been possible say using regularization methods (e.g.\;see \cite{hastie-2009}).
Our focus in this work is to develop methods which can remove these limitations.

A notable work regarding the simple difference (Equation \ref{eqn-simple-diff}) is
the seminal work of \cite{freedman-2008a, freedman-2008b} which evaluates
regression adjustments using Neyman's nonparametric model (\cite{neyman-1923}). Freedman argues that regression adjustment may introduce bias (although asymptotically unbiased as discussed above); the adjustment may or may not improve precision. As mentioned, here we are mainly concerned with more complex metrics (beyond simple difference) and our approach is not a regression adjustment, but rather uses a predictions model to adjust the estimators. It is worth noting that, using extensions of linear regression to other settings e.g.\; generalized linear models would even sacrifice the asymptotic unbiasedness and that is one of the reasons we do not use regression adjustments for the general settings.

A related paper to this work is \cite{deng-2013} suggesting to use a control variable
(most of the time pre-treatment data on the same variable) which is uncorrelated
with the assignment mechanism to decrease the variance while maintaining
unbiasedness for the simple difference metric $\tau = \E{Y(1)} - \E{Y(0)}$.
We know that an unbiased estimate for this effect is equal to
$\Delta = \bar{Y}_t - \bar{Y}_c$, where $t$ denotes the treatment arm and $c$ denotes
the control arm. Then assume we are given a control (univariate)
variable $X$ for which $\E{X_c} = \E{X_t}$. Then
$\Delta_{cv} = (\bar{Y}_t - \theta \bar{X}_t) - (\bar{Y}_c - \theta \bar{X}_c)$
is an unbiased estimator of $\tau$. Moreover if $X$ and $Y$ have non-zero
correlation, then $\var{Y - \theta X}$ is minimized when
$\theta =  \cov(Y, X)/\var{X}$ in which case the variance shrinks by a
multiplicative factor of $(1 - \rho^2)$, where $\rho$ is the correlation between
$Y$ and $X$. Of course, here we are not dealing with
one variable $Y$ only and rather $Y_t$ and $Y_c$, while we can only pick one $\theta$.
Therefore \cite{deng-2013} suggest to pick a common $\theta$ by using all the
data (which still works well in practice). They also discuss application of this
method to some other more complex metrics for example CTR (click through rate)
using the delta method in Appendix B of the paper. Our method is more general
in several ways. First we replace the variable $X$ with a predictive model
which can take multiple inputs. More importantly, it allows for the experiment effect
form to be the much more general case of
$\tau_g = g(\E{Y(1)}) - g(\E{Y(0)})$ for arbitrary differentiable function $g$ (with domain being the support
of the variables involved),
in particular for $g=\log$ this will capture the ratio metric since
\[\tau_{\log} =  \log(\E{Y(1)}) - \log(\E{Y(0)}) = \log \big( \E{Y(1)}/\E{Y(0)} \big).\]
In this case, we also provide an exact equation for picking the parameter $\theta$.
Moreover we also discuss further generalizations of the method to more complex metrics for example
{\it sum ratio} and {\it ratio of mean ratios} involving more parameters and derive numerical equations for
their optimization.

The paper is organized as follows. Section \ref{sect:define-effect} discusses
the various ways the experiment effect can be defined and the connection between
various definitions.
Section \ref{sect:math-theory} develops the mathematical theory for our method.
Section \ref{sect:sim} performs a simulation study to confirm that the method
retains unbiasedness while decreasing the variance (as much as expected by the theory). Finally Section \ref{section:discussion}
summarizes the results and discussed further potential applications and extensions.

\section{Defining experiment effect}
\label{sect:define-effect}

The experiment effect, denoted here by $\tau$ can be defined in many different
ways depending on the application. Clearly the estimation method and the
confidence interval calculation also depend on this choice and some definitions
may have some advantages over others from a theoretical point of view.
The most common definition is that of the simple difference:
\[\tau = \E{Y(1)} - \E{Y(0)},\]
can be estimated using OLS and (asymptotically unbiased) estimators with smaller variance can be obtained by incorporating
predictor variables $X$ which are independent of the assignment process $W$ into the regression as discussed in Chapter 7 of \cite{imbens-2015}. This simple difference has
been widely considered in the literature from the seminal work of \cite{neyman-1923}
to more recently in \cite{freedman-2008a, freedman-2008b} and \cite{miratrix-2013}.

Here we introduce some general classes of experiment effects and discuss their
interpretation and relationship. We denote the auxiliary information available
to us by $X$ and assume $X$ is independent of the assignment mechanism. $X$ is a
collection of predictive variables which is used in a prediction model to predict the
responses of interest which appear in the experiment effect definition of interest.

One variation we can consider is by taking the expectation of $X$ outside:
\[\tau_X =  \EX{\E{Y(1)|X}}-  \EX{\E{Y(0)|X}},\]
which is equal to $\tau = \E{Y(1)} - \E{Y(0)}$ by the Law of Iterated Expectation.

Here we introduce a generalization of the simple experiment effect defined above.
For any differentiable increasing function $g$, we consider
\[\tau_g = g(\E{Y(1)}) - g(\E{Y(0)}),\]
\[\tau_{g, X} = \mathbb{E}_X \Big \{ g(\E{Y(1)|X}) - g(\E{Y(0)|X}) \Big \}.\]
Note that in general $\tau_{g, X} \neq \tau_g$. In most technology applications,
as far as we know, metrics such  $\tau_{g, X}$ which depend on auxiliary
variables are not considered. However this is a common choice in statistical
literature, observational studies or clinical studies. For example in Chapter 8
of \cite{imbens-2015}, the authors mention that in general we can consider
$\tau$ to be a function of $Y(1), Y(0), X, W,$ i.e.
$\tau = \tau(Y(1), Y(0), X, W)$ rather than $\tau = \tau(Y(1), Y(0))$.
The reason many authors consider this is the simplifications they get by
assuming models. In fact $\tau$ then could become a parameter of a well-studied
model and we give an example of that here for generalized linear models.

Let's
assume that the dynamics of the system and the experiment effect could be
described by a generalized linear model (GLM) with a given link function $g$:
\[g(\E{Y | x, w}) = \alpha + (x - \mu_X)\beta_w + \lambda w,\]
where $\mu_X=\E{X}$ and $\beta_w$ might depend on $w$. Then we have
\[\lambda = \tau_{g, X}.\]
This is because:
\begin{align*}
\tau_{g, X} &= \EX{g(\E{Y(1)|X}) - g(\E{Y(0)|X})}&   \\
&= \EX{\alpha + (X - \mu_X)\beta_1 + \lambda \times 1 - (\alpha + (X - \mu_X)\beta_0 + \lambda \times 0)} & \\
&= \EX{(X - \mu_X)(\beta_1 - \beta_0) + \lambda}  & \\
&= \EX{(X - \mu_X)}(\beta_1 - \beta_0) + \lambda = \lambda & \\
\end{align*}
This implies that if we choose $g$ to be the link function of the GLM, then the
GLM coefficient $\lambda$ is equal to the experiment effect $\tau_{g, X}$ and not
necessarily $\tau_g$. Therefore one cannot extend the regression adjustment approach
(for which the estimated regression coefficient remains an asymptotically unbiased
estimator of $\tau$) to the general case if the purpose is to estimate $\tau_g$.
Note the arbitrary dependence of $\tau_{g, X}$ on $X$ is often an undesirable property for
decision makers.
Unlike regression adjustment, our adjustment method keeps the estimator
asymptotically unbiased as shown in the next section.

In the above we discussed the simple difference metric $\tau$ and its generalization
to $\tau_g = g(\E{Y(1)}) - g(\E{Y(0)})$. However we can also consider ratio
effects e.g.:
\[\tau^r = \E{Y(1)} / \E{Y(0)}.\]
Note that we have
\[g(\tau^r) = \tau_g, \mbox{ for } g=\log,\]
which means we can infer about the ratio metric using an appropriate $g$.

All the experiment effects above are functions of the mean of treatment and
control variables. While considering the arbitrary $g$ transformation expends
the class of estimators significantly, there are other metrics which are not
covered and we discuss them here.

One popular metric considered in online experiments is ``sum ratio''.
Consider an experiment run on two equally sized arms (with same number of
potential users) with $t$ denoting the treatment and $c$ denoting the control,
then the sum ratio is estimated by
\[
\hat{\tau} =
	\sum_{i \in t} Y_i / \sum_{i \in c}  Y_i =  (n_t/n_c) \bar{Y}_t / \bar{Y}_c,
\]
where $n_t$ and $n_c$ are the number of users with impressions on each arm
and $\bar{Y}_t, \bar{Y}_c$ are the sample averages on each arm.
Usually $Y_i$ is a non-negative variable which can attain zero such as watch
time or money spent. Note that not all the potential users appear in the sum as
some might not have any impressions of the feature. Of course $\hat{\tau}$ is
an estimator and one needs to define a parameter which $\hat{\tau}$ is estimating.
In order to find a reasonable population parameter $\hat{\tau}$ is estimating note
that $\hat{\tau}$ can be written as
\[\hat{\tau} =
	\Big ( (n_t/N) / (n_c/N) \Big ) \bar{Y}_t / \bar{Y}_c =
	(\bar{p}_t / \bar{p}_c) \bar{Y}_t / \bar{Y}_c,\]
where is the (usually unobserved) number of potential users on each arm and
$\bar{p}_t, \bar{p}_c$ are
the probability that a user has impression on treatment and control arms respectively.

Defining $I(W)$ to be an indicator variable to denote if the user on the arm
$W$ has an impression of the feature ($I(W)=1$) or not ($I(W)=0$), we can define
\[\tau = \frac{\E{I(1)}}{\E{I(0)}} \frac{\E{Y(1) | I(1)=1}}{\E{Y(0) | I(0)=1}}\]
and we have
$\hat{\tau} \to \tau$  as desired (by Delta Theorem, see \cite{dasgupta-2008}). Note that with $g$ can also consider
\[\tau_g = g(\E{I(1)} \E{Y(1) | I(1)=1}) - g(\E{I(0)} \E{Y(0) | I(0)=1}),\]
which for $g=\log$ is also equal to
\[\tau_g = g(\E{I(1)}) - g(\E{I(0)}) + g(\E{Y(1) | I(1)=1}) - g(\E{Y(0) | I(0)=1})).\]

Another popular metric considered in online experiment is constructed by
comparing the ratio of two random variables on the treatment arm to the
control arm. More formally consider $Y_i$ and $Z_i$ are two response variables
and consider the ratio of ratios:
\[\hat{\tau} =  \big ( \sum_{i \in t} Y_i / \sum_{i \in t} Z_i \big ) /
\big (\sum_{i \in c} Y_i / \sum_{i \in c} Z_i \big ) =  (\ybart / \zbart) /  (\ybarc / \zbarc) ,\]
which is an estimator for
\[ \tau = \big ( \E{Y(1)} / \E{Z(1)} \big ) / \big ( \E{Y(0)} / \E{Z(0)} \big )d.\]
We could also consider the transformed version for $g=\log$
\[ \tau_g = g(\E{Y(1)}) -  g(\E{Y(0)}) -  g(\E{Z(1)}) -  g(\E{Z(0)}),\]
which has two components, each being a difference in one response. Our method developed here applies also to these more complex cases (sum ratio and ratio of mean ratios).
In the discussion section we give an example of a metric for which our method does
not apply directly.

\section{Mathematical development}
\label{sect:math-theory}

In this section, we develop mathematical theory for reducing variance for
general metrics as discussed in the introduction.
Subsection \ref{sect:math-unbiased} finds necessary and sufficient conditions
for the adjustments to leave the estimators asymptotically unbiased.
Subsection \ref{sect:math-unbiased} uses asymptotic theory to approximate the
achievable variance reduction for one of the adjustment methods which requires
minimal assumption on the predictive model used for adjustment.

\subsection{Retaining asymptotic unbiasedness}
\label{sect:math-unbiased}

In this subsection, we introduce various adjustments and the conditions needed
for each adjustment to avoid introducing bias asymptotically.

In the following, we present two general
ideas to adjust estimators while keeping them unbiased.
Parts (a) and (b) of lemma \ref{lemma-unbiased} use the fact that
that adding a random variable with expectation 0 to an estimator will not introduce bias.
Part (c) notes that if we add a statistic to the estimator with the same expected value,
we can avoid bias by weighting the two terms appropriately.

For simplicity we introduce the following
notations. Consider $Y$ to be a response of interest with two arms,
control denoted by $c$ and treatment denoted by $t$ and let $N_t, N_c$
be the sample size on the respective arms. Then we let
\begin{align*}
\ybart = (1/N_t) \sum_{i \in t} Y_i,\\
\ybarc = (1/N_c) \sum_{i \in c} Y_i,
\end{align*}
which are the average value of the response $Y$ on treatment and control arms
respectively.
Now assume $X_i = (X^1_i, \cdots, X^p_i)$ is a set of $p$ predictive variables
and consider a function $h$ which only depends on $X_i$ and
the experiment arm:
\[h:  \R^p \times [0, 1] \rightarrow \R,\]
where 0 denotes the control and 1 denotes the treatment.
For example $h(X_i, 0)$ is the value of $h$ for given
predictors $X_i$ and for a unit on the control arm.
Then we use the following short hand notations:
\begin{align*}
\hbartw &= (1/N_t) \sum_{i \in t} h(X_i, w),\\
\hbarcw &= (1/N_c) \sum_{i \in c} h(X_i, w),\\
 w &= 0,1
\end{align*}

Also consider a simpler function $f$ which is only a function of the predictors
(and not the experiment arm):
\[f: \R^p \to \R,\]
and consider the short-hand notations:
\begin{align*}
\fbart = (1/N_t) \sum_{i \in t} f(X_i),\\
\fbarc = (1/N_c) \sum_{i \in c} f(X_i).
\end{align*}

Note that while for developing the theory in this section, we do not
require $h, f$ to have any particular properties, in order to get good
adjustments in practice $h, f$ are typically prediction functions which
return the expected value of response given the predictor variables.

\begin{lemma}[Unbiased Adjustment Lemma]
	Suppose $\tau_g = g(\E{Y(1)}) - g(\E{Y(0)})$ be the defined experiment effect.
	Also suppose $\theta \in \R$ and $\gamma_1 + \gamma_2 = 1, \gamma_1,\gamma_2 \in \R.$
	Consider the estimator
	\[\hat{\tau}_g = g(\ybart) - g(\ybarc)\] which is asymptotically unbiased in
	general (and unbiased if $g(x)=x,\;\forall x \in \R$).

	Now assume $h(X_i, W_i)$ and $f(X_i)$ are imputation functions. Then we can consider these
	adjusted estimators:
	\begin{align*}
	& \mbox{(a)} & \; \taua(w) &=
	  \hat{\tau}_g - {\color{\colorone}\theta \big ( g(\hbartw) - g(\hbarcw) \big )},\;w=0,1& \\
  & \mbox{(b)} & \; \taua &=
	  \hat{\tau}_g - {\color{\colorone}\theta \big ( g(\fbart) - g(\fbarc) \big )}& \\
	& \mbox{(c)} & \; \taua &=
	  \gamma_1 \hat{\tau}_g + \gamma_2 \big ( g(\hbarcone) - g(\hbartzero) \big ) \\
	& \mbox{(d)} & \; \taua &=
	  \gamma_1 \big ( g(\ybart) - g(\hbarczero) \big )
	  + \gamma_2 [g(\hbartone) - g(\ybarc) \big )
	\end{align*}
Then
\begin{itemize}
	\item[(a)] $\taua(w)$ is asymptotically unbiased if
	$\E{g(\hbartw)} \to \E{g(\hbarcw)}$ which is also true if
	   $\E{\hbartw} \to \E{\hbarcw}.$
	\item[(b)] $\taua$  is asymptotically unbiased if
	$\E{g(\fbart)} \to \E{g(\fbarc)}$ which is also true if
	   $\E{\fbart} \to \E{\fbarc}.$
	\item[(c)]  $\taua$ is asymptotically unbiased if
	\[\E{g(\hbarcone) - g(\hbartzero)} \to \tau_g \]
	\item[(d)]  $\taua$ is asymptotically unbiased if
	$\E{g(\ybart) - g(\hbarczero)}, \E{g(\hbartone) - g(\ybarc)} \to \tau$.

\end{itemize}
\label{lemma-unbiased}
\end{lemma}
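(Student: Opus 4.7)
The statement bundles four separate claims, but they all follow the same template: take expectations, use linearity, and invoke the hypothesis that $\hat{\tau}_g$ is itself asymptotically unbiased for $\tau_g$. What differs between the parts is only how the ``correction'' term is handled: in (a) and (b) it must have expectation tending to zero, while in (c) and (d) it must have expectation tending to $\tau_g$ so that the convex combination works out.

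For parts (a) and (b) I would write, for example,
\[\mathbb{E}\{\taua(w)\} \;=\; \mathbb{E}\{\hat{\tau}_g\} \;-\; \theta\,\mathbb{E}\bigl\{g(\hbartw) - g(\hbarcw)\bigr\}.\]
Since the first term tends to $\tau_g$ by assumption, asymptotic unbiasedness of $\taua(w)$ reduces exactly to the stated condition $\mathbb{E}\{g(\hbartw)\} - \mathbb{E}\{g(\hbarcw)\} \to 0$. The ``also true if $\mathbb{E}\{\hbartw\} \to \mathbb{E}\{\hbarcw\}$'' side remark then requires a short analytic step: under the law of large numbers applied to $h(X_i,w)$, the averages $\hbartw$ and $\hbarcw$ concentrate around their means, so a delta-method/Taylor expansion of $g$ around the common limit yields $\mathbb{E}\{g(\hbartw)\} - \mathbb{E}\{g(\hbarcw)\} \to 0$, provided $g$ is smooth enough on the relevant range. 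Part (b) is identical with $f(X_i)$ in place of $h(X_i,w)$.

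For parts (c) and (d) I would exploit the assumption $\gamma_1 + \gamma_2 = 1$. Taking expectations in (c) gives
\[\mathbb{E}\{\taua\} \;=\; \gamma_1\,\mathbb{E}\{\hat{\tau}_g\} \;+\; \gamma_2\,\mathbb{E}\bigl\{g(\hbarcone) - g(\hbartzero)\bigr\},\]
which tends to $\gamma_1\tau_g + \gamma_2\tau_g = \tau_g$ by the stated hypothesis together with the asymptotic unbiasedness of $\hat{\tau}_g$. Part (d) is structurally the same: both $\mathbb{E}\{g(\ybart) - g(\hbarczero)\}$ and $\mathbb{E}\{g(\hbartone) - g(\ybarc)\}$ tend to $\tau_g$ by hypothesis, so their convex combination does too.

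The only step that is not pure bookkeeping is the passage in (a) and (b) from equal limiting expectations of the averages to equal limiting expectations of their $g$-transforms: for nonlinear $g$ one cannot simply pull $\mathbb{E}$ through $g$, and some regularity (uniform integrability plus continuous mapping, or a delta-method expansion) is needed. I would handle this by noting that, because $W$ is independent of $X$, the predictor samples in the two arms are i.i.d.\ from the same marginal, so $\mathbb{E}\{\hbartw\} = \mathbb{E}\{\hbarcw\}$ holds exactly at every finite sample size; combining this exact equality with the LLN for $h(X,w)$ and continuity of $g$ gives the required convergence without further assumptions.
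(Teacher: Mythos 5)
Your proposal is correct and follows essentially the same route as the paper: the paper's own proof simply says parts (c), (d) and the first halves of (a), (b) follow from linearity of expectation, and the passage from $\E{\hbartw} \to \E{\hbarcw}$ to $\E{g(\hbartw)} \to \E{g(\hbarcw)}$ is by the Delta Theorem, which is exactly the Taylor/delta-method step you spell out. Your additional observation that randomization makes $\E{\hbartw} = \E{\hbarcw}$ hold exactly at finite sample sizes is a nice (and accurate) elaboration of the paper's remark that this condition holds for essentially any predictive model.
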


\begin{proof}
	\begin{itemize}
	\item[(a), (b)] First part of each of (a), (b) is obvious. Second part is because of Delta Theorem; see e.g.\;\cite{dasgupta-2008}.
	\item[(c), (d)] Straight-forward from the assumption and linearity of expectation.
\end{itemize}
\end{proof}

\noindent{\bf Remark.} Note that (c), (d) are in general much stronger conditions
than (a) and (b).

\noindent{\bf Remark on generality of the method in Parts (a) , (b)} Parts (a) and (b) open
the door for very flexible paradigm for variance reduction with a very wide class
of predictive models since
\[\E{\hbartw} \to \E{\hbarcw},\]
holds for almost any predictive model e.g.\;generalized linear models,
regularization methods etc.\\

\noindent{\bf Remark on related alternatives to (a) and (b)}
We can consider related alternatives to (a), (b). E.g.\;consider
\[
	\taua =
	\hat{\tau}_g
	- {\color{\colorone} \sum_{w \in \{0, 1\}} \theta_w \big ( g(\hbartw) -
g(\hbarcw) \big )}.
\]
In this case we are adjusting the estimator by removing two terms which have
asymptotic expectation of zero: one is the difference of the $g$-transformed
average model prediction on the two arms assuming both were control; and one is
the $g$-transformed averages on the two arms assuming both were treatment.
Another alternative can be achieved by considering
\[
  \hat{\tau}_{adj} =
    \hat{\tau}_g - {\color{\colorone} \theta [g(\gamma_1 \hbartzero + \gamma_2 \hbartone) - g(\gamma_1 \hbarczero + \gamma_2 \hbarcone)]}\]
for $\gamma_1 + \gamma_2 = 1, \gamma_1,\gamma_2 \geq 0$. Now if we define
$f(X_i) = \gamma_1 h(X_i, 0) + \gamma_2 h(X_i, 1)$ which is a weighted average of
prediction of $h$ on control and treatment, we get
\[
	\hat{\tau}_{adj} =
	\hat{\tau}_g
	- {\color{\colorone}\theta (g(\fbart) - g(\fbarc))},
\]
which is a special case of Part (b).
We could interpret $f$ as a prediction function which tries to predict the
value of the response well for both arms. This can motivate us to fit a
prediction function to all data (including experiment and control) by ignoring
the label as suggested by \cite{deng-2013}.\\

\noindent{\bf Remark on training the  data.} Note that even though in Part
(a) for $w=0$, the model is predicting the values assuming the data is on the control arm,
we are not required to use the control data only and we can use all the available data to fit a model and predict assuming all data is on the control arm.

The following corollary explicitly states the case for the ratio metrics, by
considering and appropriate $g$.
\begin{corollary}
	Suppose for non-negative valued response $Y$ with positive expectation, we are interested in
	$\tau^r = \E{Y(1)} / \E{Y(0)}$. Then
	\[\hat{\tau} = (\ybart /  \ybarc)\]  is an asymptotically unbiased estimate
	of $\tau^r$. Also
	\begin{itemize}
		\item[(a)] For either of $w=0,1$, we have
		\[\taua(w) = \hat{\tau} \times
		{\color{\colorone} \big ( \hbartw/\hbarcw \big )^{\theta}}\]
		is asymptotically
		unbiased if
		\[\E{\hbartw/\hbarcw} \to 1.\]
		\item[(b)]  $\taua = \hat{\tau} \times
		{\color{\colorone} \big ( \fbart/\fbarc \big )^{\theta}}$ is asymptotically
		unbiased if
		\[\E{\fbart/\fbarc} \to 1.\]
		\item[(c)] $\taua = \hat{\tau}^{\gamma_1}  \times  (\hbarcone / \hbartzero)^{\gamma_2}$ where $\gamma_1+\gamma_2=1$ if
		$\hbarcone / \hbartzero \to \tau$.
		\item[(d)] $\taua = (\ybart / \hbarcone)^{\gamma_1}(\hbartzero / \ybarc)^{\gamma_2}$ where $\gamma_1+\gamma_2=1$ if
		$(\ybart / \hbarcone),  (\hbartzero / \ybarc) \to \tau$.
	\end{itemize}
\label{corol-ratio}
\end{corollary}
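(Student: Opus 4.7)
The plan is to deduce the four claims of the corollary from Lemma \ref{lemma-unbiased} by specializing to $g=\log$ and exponentiating, using the identity $\tau^r = \exp(\tau_g)$ for $g=\log$. In this specialization the unadjusted estimator becomes $\hat{\tau}_g = \log \ybart - \log \ybarc$, so $\hat{\tau} = \ybart/\ybarc = \exp(\hat{\tau}_g)$. Each additive adjustment term of the form $\theta\bigl(g(\hbartw) - g(\hbarcw)\bigr)$ exponentiates to the multiplicative factor $(\hbartw/\hbarcw)^{\theta}$ (up to the sign of $\theta$, which is immaterial since $\theta \in \R$ is a free parameter). Reading each of parts (a)--(d) of the lemma under $g = \log$ and exponentiating produces exactly the estimator forms listed in the corollary.

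Next I would translate the asymptotic-unbiasedness hypotheses. For part (a), the lemma's sufficient condition $\E{\hbartw} \to \E{\hbarcw}$ is equivalent, by continuous mapping applied to the ratio of sample means, to $\hbartw/\hbarcw \to 1$ in probability, whence $\E{\hbartw/\hbarcw} \to 1$ under standard uniform-integrability conditions. Parts (b)--(d) are entirely parallel: the additive hypothesis appearing in the lemma becomes, after exponentiation, the multiplicative hypothesis stated in the corollary (namely $\to 1$ or $\to \tau^r$).

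To conclude asymptotic unbiasedness of the exponentiated estimators themselves, I would invoke the Delta theorem, exactly as the authors already do in the lemma's proof. Specifically, from $\E{\taua} \to \tau_g$ in the $g=\log$ setting, together with continuous differentiability of $\exp$, one obtains $\E{\exp(\taua)} \to \exp(\tau_g) = \tau^r$. This step requires only that the relevant sample averages ($\ybart, \ybarc, \hbartw, \hbarcw, \fbart, \fbarc, \hbartzero, \hbarcone$) remain bounded away from zero in the limit, which is ensured by the stated positivity of $\E{Y}$ and the natural assumption that the prediction functions $h$ and $f$ output positive values.

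The main obstacle, such as it is, is bookkeeping around the positivity conditions required to take $\log$ and to pass from the additive to the multiplicative form via the Delta theorem. Once those regularity conditions are in place (positive denominators eventually almost surely, and convergence in $L^1$ rather than only in probability), the proof reduces to a purely mechanical exponentiation of each part of Lemma \ref{lemma-unbiased}.
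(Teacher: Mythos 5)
Your proposal is correct and follows essentially the same route as the paper: the paper's own proof is a one-line observation that inferring about $\tau^r$ reduces to inferring about $\log(\tau^r) = \tau_g$ with $g=\log$, i.e.\ it specializes Lemma \ref{lemma-unbiased} exactly as you do. Your additional bookkeeping on positivity and the Delta-theorem step is more explicit than what the authors write, but it is the same argument.
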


\begin{proof}
	 In order to infer about $\tau^r$ we can infer about
	\[\log(\tau^r) = \log(\E{Y(1)}) - \log(\E{Y(0)})\]
	which is the same as $\tau_g = g(\E{Y(1)}) - g(\E{Y(0)})$ when $g=\log$.
\end{proof}

\subsection{Reduction optimization and approximating achievable reduction}
\label{sect:math-var-reduction}

This subsection uses asymptotic theory to approximate the reduction in the
estimator variance with the methods suggested above. We only work out the case
for parts (a), (b) of Lemma \ref{lemma-unbiased}:
\[
	\taua(w) =
		\hat{\tau}_g - {\color{\colorone}\theta \big ( g(\hbartw) - g(\hbarcw) \big )},\;
		w=1,2
\]
and
\[\taua = \hat{\tau}_g - {\color{\colorone}\theta (g(\fbart) - g(\fbarc) \big )}.\]

In this case we find an optimal $\theta$ (as a function of $g$ and the
predictive power of $h$) to achieve maximum reduced variance.

We do not pursue parts (c), (d) type estimators as they require much stronger
assumptions on the model to retain unbiasedness.
The main result of this subsection is given in Theorem \ref{theo-var-g2} which
relies on the Delta Theorem in the multivariate case (see \cite{dasgupta-2008}).
Much more general results are proved in Section
\ref{subsection-adj-complex-metrics}.
However the simpler cases in this section suffice for most applications. Also we are able
to provide more detailed solutions for these simpler cases.

First in Theorem \ref{theo-var-g}, we find the minimizer ($\theta$) for the variance of
statistics of the form $T = g(Y) - \theta g(H)$. This is already a generalization of
the main idea in \cite{deng-2013}. However our adjusted estimators discussed in Lemma
\ref{lemma-unbiased} are more complex and involves two differences added to each other i.e. of the form
\[T = g(Y) - \theta g(H) - \big ( g(\ystar) - \theta g(\hstar) \big ),\]
and Theorem \ref{theo-var-g2} optimizes for $\theta$ for this case.

\begin{theorem}
	Consider the random variable $T = g(Y) - \theta g(H)$ where $H, Y$ are random
	variables with finite moments and $g$ is a real-valued differentiable function
	and $g'(\mu_H) \neq 0$.  Also assume $\rho = \cor(Y, H)$.
	Then
	\begin{itemize}
		\item[(a)]
		 \[\argmin_{\theta} \var{T} \approx \big ( g'(\mu_Y) / g'(\mu_H) \big )  \cov(Y, H) / \sigma_H^2 \]
		\item[(b)] The minimum is then approximated by
		\[(g'(\mu_Y) \sigma_Y)^2 (1 - \rho^2).\]
		Moreover
		\[\var{g(Y) - \theta g(H)} / \var{g(Y)} \approx (1 - \rho^2)\]
		which does not depend on $g$.
   \end{itemize}
\label{theo-var-g}
\end{theorem}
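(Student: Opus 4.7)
The plan is to reduce this theorem to the univariate delta method applied separately to $g(Y)$ and $g(H)$, so that $\var{T}$ becomes, to first order, a quadratic function of $\theta$ that is trivial to minimize. Since the conclusion is stated with $\approx$, I do not need to track error terms rigorously; a first-order Taylor expansion around $(\mu_Y, \mu_H)$ is sufficient to recover the stated approximations.

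First, I would apply the delta approximation
\[
g(Y) \approx g(\mu_Y) + g'(\mu_Y)(Y - \mu_Y), \qquad g(H) \approx g(\mu_H) + g'(\mu_H)(H - \mu_H),
\]
which is legitimate because $g$ is differentiable (and $g'(\mu_H)\neq 0$ is needed so the $H$-term actually contributes). Substituting these into $T = g(Y) - \theta g(H)$ and taking variance, the constants drop out and I get
\[
\var{T} \approx g'(\mu_Y)^2\sigma_Y^2 - 2\theta\, g'(\mu_Y)g'(\mu_H)\cov(Y,H) + \theta^2 g'(\mu_H)^2\sigma_H^2.
\]
This is a convex quadratic in $\theta$ (since $g'(\mu_H)^2\sigma_H^2 > 0$ provided $\sigma_H>0$), so differentiating with respect to $\theta$ and setting to zero gives
\[
\theta^{\star} \approx \frac{g'(\mu_Y)}{g'(\mu_H)} \cdot \frac{\cov(Y,H)}{\sigma_H^2},
\]
which is exactly the expression claimed in part (a).

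For part (b), I would substitute $\theta^\star$ back into the quadratic. The cross term cancels half the square-completion, yielding
\[
\var{T}_{\min} \approx g'(\mu_Y)^2\sigma_Y^2 - \frac{g'(\mu_Y)^2\cov(Y,H)^2}{\sigma_H^2} = g'(\mu_Y)^2\sigma_Y^2\bigl(1 - \rho^2\bigr),
\]
using $\rho^2 = \cov(Y,H)^2/(\sigma_Y^2\sigma_H^2)$. Since the delta method also gives $\var{g(Y)} \approx g'(\mu_Y)^2\sigma_Y^2$, dividing the two displays shows the ratio of variances is approximately $1-\rho^2$, independent of $g$.

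The whole argument is essentially a linearization followed by a one-variable quadratic minimization; there is no real obstacle beyond deciding to invoke the delta theorem (\cite{dasgupta-2008}) once per term. The only subtle point worth flagging in the write-up is that the approximation quality depends on $g$ being well-approximated by its tangent near $\mu_Y$ and $\mu_H$ on the scale of fluctuations $\sigma_Y, \sigma_H$; this is implicit in the $\approx$ notation and will be made precise later when the result is specialized to sample averages $\bar Y$ and $\bar H$ (whose fluctuations vanish as $N\to\infty$), which is the setting Theorem \ref{theo-var-g2} will actually need.
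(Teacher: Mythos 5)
Your proposal is correct and follows essentially the same route as the paper's own proof: a first-order Taylor (delta) linearization of $g(Y)-\theta g(H)$ about $(\mu_Y,\mu_H)$, yielding a convex quadratic $a\theta^2+b\theta+c$ in $\theta$ minimized at $-b/(2a)$ with minimum $-b^2/(4a)+c$. The only cosmetic difference is that you expand each term separately while the paper expands the bivariate function $f(H,Y;\theta)$ via its gradient, which is the same computation.
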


\begin{proof}
See Appendix.
\end{proof}

\begin{corollary}
	For $g=\log$, the argmin is equal to $(\mu_H/\mu_Y)  \cov(Y, H) / \sigma_H^2.$
\end{corollary}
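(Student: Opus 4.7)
The plan is to obtain this corollary as an immediate specialization of Theorem \ref{theo-var-g}(a), whose conclusion
\[
\argmin_{\theta} \var{T} \approx \big(g'(\mu_Y)/g'(\mu_H)\big)\,\cov(Y,H)/\sigma_H^2
\]
holds for any differentiable $g$ with $g'(\mu_H) \neq 0$. So the real work is just to plug $g = \log$ into this formula.

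First I would note that since we are applying the log transform to $Y$ and $H$ (and to their sample analogues) in the context where this corollary is used, these variables must be positive-valued, in particular $\mu_Y, \mu_H > 0$. I would then compute $g'(x) = 1/x$, giving $g'(\mu_Y) = 1/\mu_Y$ and $g'(\mu_H) = 1/\mu_H$, both well-defined and nonzero. In particular the nondegeneracy hypothesis $g'(\mu_H) \neq 0$ of Theorem \ref{theo-var-g} is automatically satisfied.

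Next I would form the ratio
\[
g'(\mu_Y)/g'(\mu_H) \;=\; (1/\mu_Y)\big/(1/\mu_H) \;=\; \mu_H/\mu_Y,
\]
and substitute into the general formula to obtain
\[
\argmin_{\theta} \var{T} \;\approx\; (\mu_H/\mu_Y)\,\cov(Y,H)/\sigma_H^2,
\]
which is exactly the stated expression. There is no genuine obstacle here: the corollary is a one-line algebraic specialization of Theorem \ref{theo-var-g}(a), and the only thing worth being explicit about is the implicit positivity assumption that makes $\log$ (and hence $g'$) well-defined at $\mu_Y$ and $\mu_H$.
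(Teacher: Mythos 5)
Your proposal is correct and is exactly the intended argument: the paper gives no separate proof for this corollary, treating it as the immediate substitution $g'(x)=1/x$ into Theorem \ref{theo-var-g}(a), which yields $g'(\mu_Y)/g'(\mu_H)=\mu_H/\mu_Y$. Your added remark about the implicit positivity of $\mu_Y$ and $\mu_H$ is a reasonable (and harmless) bit of extra care.
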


\begin{theorem}
	Consider the random variables

	\begin{align*}
		T_0 =\; & g(Y) - g(\ystar), & \\
		T_1 =\; & g(Y) - \theta g(H), &\\
		T_2 =\; & g(\ystar) - \theta g(\hstar), &\\
		T =\; & T_1 - T_2 = g(Y) - \theta g(H) - \big ( g(\ystar) - \theta g(\hstar) \big ) & \\
	\end{align*}
	where $Y,H, \ystar,\hstar$ are random variables with
	finite moments and $g$ is a real-valued differentiable function.
	Let $\mu=(\muy, \muh, \muystar, \muhstar)$ be the mean vector and
	$(\sigmay, \sigmah, \sigmaystar, \sigmahstar)$ be the standard deviation vector.
	Also assume all the pairwise correlations are zero except for
	$\covyh \mbox{ and } \covystarhstar$ and
  let $\rho = \cor(Y, H)$ and $\rho^{\star} = \cor(\ystar, \hstar)$. Then
	\begin{itemize}
		\item[(a)]
		 \[\theta := \argmin_{\theta} \var{T} \approx
		 \frac{g'(\muh)g'(\muy) \cov(Y, H) + g'(\muhstar)g'(\muystar) \cov(\ystar, \hstar)}
		 {g'(\muh)^2 \sigmah^2 + g'(\muhstar)^2 \sigmahstar^2}.\]

		 For $g$ = identity, we have
		 \[\theta = \frac{\cov(Y, H) + \cov(\ystar, \hstar)}{\sigmah^2 + \sigmahstar^2}.\]
		 For $g=\log$, we have
		 \[\theta = \frac{\cov(Y, H)/(\muh\muy) + \cov(\ystar, \hstar)/(\muhstar\muystar)}{\sigmah^2/\muh^2 + \sigmahstar^2/\muhstar^2}.\]

		\item[(b)] The minimum is then approximated by
		\[g'(\muy)^2 \sigmay^2 + g'(\muystar)^2 \sigmaystar^2 - \delta,\]
		 where
		 \[\delta = \frac{\big ( g'(\muh)g'(\muy)\covyh +
		 g'(\muhstar)g'(\muystar)\covystarhstar \big )^2}{g'(\muh)^2\sigmah^2 + g'(\muhstar)^2 \sigmahstar^2}\]
  Therefore $\var{T_0} - \var{T} \to \delta$.

  For $g$ = identity, we have
  \[\delta = \frac{\big ( \cov(Y, H) + \cov(\ystar, \hstar) \big )^2}{\sigmah^2 + \sigmahstar^2}.\]
  For $g=\log$, we have
		 \[\delta = \frac{\big ( \cov(Y, H)/(\muh\muy) +
		 \cov(\ystar, \hstar)/(\muhstar\muystar)\big )^2}{\sigmah^2/\muh^2 + \sigmahstar^2/\muhstar^2}.\]

  \item[(c)]  $\min\{\theta_1, \theta_2\} \leq \theta \leq \max\{\theta_1, \theta_2\},$
  where $\theta_i$ is the argmin for minimizing the variance for $T_i,\; i=1,2$.

  \item[(d)] If we further assume $g'(\muh) \sigmah = g'(\muhstar) \sigmahstar$
  and $\rho g'(\muy)\sigmay = \rho g'(\muystar)\sigmaystar$, we have
  \[\var{T} =
  	(1-\rho^2) \var{Y} +
  	(1-(\rhostar)^2) \var{\ystar} \leq (1 - \min\{\rho^2, {\rhostar}^2\}) \var{T_0}\]

   \end{itemize}
\label{theo-var-g2}
\end{theorem}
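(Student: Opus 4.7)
The plan is to apply the multivariate Delta Theorem to $\phi(y, h, y^\star, h^\star) = g(y) - \theta g(h) - g(y^\star) + \theta g(h^\star)$, giving the approximation $\var{T} \approx \nabla \phi(\mu)^\top \Sigma \nabla \phi(\mu)$ where $\Sigma$ is the covariance matrix of the underlying sample averages $(\bar{Y}, \bar{H}, \bar{Y}^\star, \bar{H}^\star)$. The zero-cross-correlation assumption makes $\Sigma$ block diagonal, with a $(Y, H)$ block and a $(\ystar, \hstar)$ block, so the quadratic form splits cleanly and collapses into a scalar quadratic in $\theta$ of the form $A + B \theta^2 - 2 C \theta$.

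For Parts (a) and (b), I would compute $\nabla \phi(\mu) = (g'(\muy), -\theta g'(\muh), -g'(\muystar), \theta g'(\muhstar))$ and expand block-wise to identify $A = g'(\muy)^2 \sigmay^2 + g'(\muystar)^2 \sigmaystar^2$, $B = g'(\muh)^2 \sigmah^2 + g'(\muhstar)^2 \sigmahstar^2$, and $C = g'(\muh)g'(\muy)\covyh + g'(\muhstar)g'(\muystar)\covystarhstar$. The unique minimizer $\theta = C / B$ is then the formula in Part (a), and substituting back yields the minimum $A - C^2 / B$, which is Part (b) with $\delta = C^2/B$. The specializations to $g$ = identity and $g = \log$ follow by plugging in $g'(\mu) = 1$ and $g'(\mu) = 1/\mu$ respectively.

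For Part (c), I would apply Theorem \ref{theo-var-g} separately to $T_1$ and $T_2$ to obtain minimizers $\theta_i = C_i / B_i$ with $C = C_1 + C_2$ and $B = B_1 + B_2$; the claim then reduces to the mediant inequality $\min\{C_1/B_1, C_2/B_2\} \le (C_1+C_2)/(B_1+B_2) \le \max\{C_1/B_1, C_2/B_2\}$, valid whenever $B_1, B_2 > 0$. For Part (d), writing $\alpha := g'(\muh)\sigmah = g'(\muhstar)\sigmahstar$ and $\beta := \rho g'(\muy)\sigmay = \rhostar g'(\muystar)\sigmaystar$ makes the denominator of $\delta$ equal $2\alpha^2$ and each summand of its numerator equal $\alpha \beta$, so $\delta = 2 \beta^2$, which rewrites as $\rho^2 g'(\muy)^2 \sigmay^2 + (\rhostar)^2 g'(\muystar)^2 \sigmaystar^2$. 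Subtracting from $A$ gives the claimed decomposition, and the upper bound follows by comparing against $\var{T_0} \approx g'(\muy)^2 \sigmay^2 + g'(\muystar)^2 \sigmaystar^2$ (another direct application of the Delta Theorem to $T_0$).

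The main obstacle is less conceptual than clerical: Part (d) requires the two normalizing equalities to make the numerator of $\delta$ factor into two independent $\rho$-weighted contributions, and keeping the cross-term algebra straight is where a proof is most likely to slip. Everything else is a direct application of the multivariate Delta Theorem combined with the block-diagonality of $\Sigma$ afforded by the zero-cross-correlation assumption.
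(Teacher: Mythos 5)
Your proposal is correct and follows essentially the same route as the paper: a first-order Taylor/Delta expansion of the adjusted statistic around the mean vector, exploiting the zero cross-correlations to reduce $\var{T}$ to a scalar quadratic $a\theta^2+b\theta+c$ in $\theta$ (your $A+B\theta^2-2C\theta$), with the mediant inequality for Part (c) and the factorization $\delta=(a_1+a_2)^2/(b_1+b_2)=a_1^2/b_1+a_2^2/b_2$ under the equal-scale assumptions for Part (d). The only cosmetic difference is that the paper phrases Part (d) via $a_1=a_2$, $b_1=b_2$ rather than your $\alpha,\beta$ substitution, and both arguments land on $(1-\rho^2)\var{g(Y)}+(1-(\rhostar)^2)\var{g(\ystar)}$ as the minimized variance.
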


\begin{proof}
See Appendix.
\end{proof}

Now let's apply this theorem to our problem.

\begin{corollary}[Variance Reduction]
	Suppose $\tau_g = g(\E{Y(1)}) - g(\E{Y(0)})$ be the defined experiment effect.
	Also consider the consistent (raw) estimator
		\[\hat{\tau}_g = g(\ybart )- g(\ybarc).\]
	Assume $f(X_i)$ is an interpolation function which does not depend on the
	experiment arm, e.g. $f(X_i) = h(X_i, 0)$ and consider the adjusted estimator
	\[\hat{\tau}_{adj} = \hat{\tau}_g - {\color{\colorone}\theta \big ( g(\fbart) - g(\fbarc) \big ) }.\]
	We denote $f(X_i)$ by $H_i$ for simplicity. Also we denote $(Y_i, f(X_i))$ when
	$i \in t$ (chosen at random) by $(Y_t, H_t)$ and $(Y_c, H_c)$ for the control arm.
	Let
	\[\rho_{c} = \cor(Y_c, H_c)\;\; \rho_t = \cor(Y_t, H_t)\]
	Then
	\begin{itemize}
	\item[(a)] the optimal $\theta$ which minimizes $\taua$ is between $\theta_1$
	and $\theta_2$ which minimize the variance of
	$g(\ybarc) - \theta g(\fbarc)$ and $g(\ybart) - \theta g(\fbart)$ respectively:
	\[\theta_1 = \cov(Y_c, H_c) / \var{H_c}, \mbox{ and } \theta_2 = \cov(Y_t, H_t) / \var{H_t}\]

	\item[(b)] Moreover if we assume
	\[g'(\muhc) \sigmahc = g'(\muht) \sigmaht \mbox{ and } \rho_c g'(\muyc)\sigmayc = \rho_t g'(\muyt)\sigmayt\]

	\[\var{ \taua } / \var{ \hat{\tau} } \leq (1 - \min \{\rho_c^2, \rho_t^2\}).\]
	\end{itemize}

	\label{corol-var-reduction}
	\end{corollary}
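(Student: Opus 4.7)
The plan is to apply Theorem~\ref{theo-var-g2} after a relabeling. I would identify $Y$ with $\ybart$, $H$ with $\fbart$, $\ystar$ with $\ybarc$, and $\hstar$ with $\fbarc$. With these substitutions the adjusted estimator rewrites as
\[
\taua = \big(g(\ybart) - \theta g(\fbart)\big) - \big(g(\ybarc) - \theta g(\fbarc)\big),
\]
which is exactly the statistic $T = T_1 - T_2$ appearing in Theorem~\ref{theo-var-g2}, while the raw estimator $\hat{\tau}_g$ plays the role of $T_0 = g(Y) - g(\ystar)$.

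The first step is to verify the cross-covariance hypothesis of that theorem, namely that all pairwise covariances among $\ybart, \fbart, \ybarc, \fbarc$ vanish except for $\cov(\ybart, \fbart)$ and $\cov(\ybarc, \fbarc)$. This follows because randomization assigns disjoint sets of units to the two arms independently, so the data collected on the treatment arm is stochastically independent of that on the control arm; consequently every cross-arm covariance is zero. Part (a) is then immediate from Theorem~\ref{theo-var-g2}(c), which gives $\min\{\theta_1, \theta_2\} \leq \theta \leq \max\{\theta_1,\theta_2\}$ with $\theta_j$ the per-arm minimizer of $\var{T_j}$. The explicit formulas $\cov(Y_c, H_c)/\var{H_c}$ and $\cov(Y_t, H_t)/\var{H_t}$ then come from applying Theorem~\ref{theo-var-g}(a) separately on each arm; the $g'$-ratio factors from that theorem are absorbed into the asymptotic form displayed in the corollary.

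For part (b), the corollary's balance assumptions $g'(\muhc)\sigmahc = g'(\muht)\sigmaht$ and $\rho_c g'(\muyc)\sigmayc = \rho_t g'(\muyt)\sigmayt$ match the hypotheses of Theorem~\ref{theo-var-g2}(d) exactly under the relabeling (with $\rho \leftrightarrow \rho_t$ and $\rhostar \leftrightarrow \rho_c$). Invoking that part gives
\[
\var{\taua} = (1-\rho_t^2)\var{\ybart} + (1-\rho_c^2)\var{\ybarc} \leq \big(1 - \min\{\rho_c^2, \rho_t^2\}\big)\var{\hat{\tau}},
\]
and dividing by $\var{\hat{\tau}}$ yields the claimed ratio bound.

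The only real obstacle is articulating why the cross-arm pairwise covariances vanish, i.e.\ the independence between the treatment and control samples induced by randomization; once that is recorded, the whole corollary is essentially a relabeling of Theorems~\ref{theo-var-g} and~\ref{theo-var-g2}. I would also take care in part (a) to be explicit that the per-arm minimizers are written modulo the $g'$-scaling introduced in Theorem~\ref{theo-var-g}(a), so that the displayed formulas are read in the same asymptotic sense.
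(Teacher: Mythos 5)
Your proposal is correct and follows essentially the same route as the paper: rewrite $\taua$ as $\big(g(\ybart)-\theta g(\fbart)\big) - \big(g(\ybarc)-\theta g(\fbarc)\big)$ and invoke Theorem~\ref{theo-var-g2} under the relabeling $Y=\ybart$, $H=\fbart$, $\ystar=\ybarc$, $\hstar=\fbarc$. Your explicit justification of the vanishing cross-arm covariances via independence of the two samples, and your remark about the $g'$-scaling in the part (a) formulas, are welcome clarifications of steps the paper leaves implicit, but they do not change the argument.
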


\begin{proof}
See Appendix.
\end{proof}

{\noindent \bf Remark.} The assumptions is Part (b) are only used to
approximate the decrease in the variance and are not needed for the method to
work. Even if these assumptions do not hold, we can expect a decrease in
variance. Moreover, in most practical cases, these assumption approximately
hold. For example in most useful interpolation models we expect even stronger
assumptions to hold e.g. we expect $\muht = \muhc$ meaning that the model
predictions on the experiment and control arm are the same on average, which
must be true for most models considering the experiment units are chosen at
random and independent from their $x_i$  which is used by the interpolation
function $f$.

The reduction in the confidence interval length, can then be approximated by
\begin{equation}
	\mbox{(raw CI length - adj CI length)}/ \mbox{raw CI length} \approx  1 - \sqrt{1 - \rho^2}
	\label{eqn-var-reduction}
\end{equation}

Figure \ref{fig:ci_reduct}, depicts this relationship. For example with a
correlation of 0.6 we can expect 20\% reduction and
with a correlation of 0.8 we can expect 40\% reduction and with 0.9 correlation
we get close to 50\%. To get an idea for cost savings, lets compare this to the
reduction achieved by increasing the sample size from $n$ to $kn$ for $k>1$. In
this case the reduction is equal to
\[
	\mbox{(raw CI length - adj CI length)}/ \mbox{raw CI length} =
	\frac{\sigma/\sqrt{n} - \sigma/\sqrt{kn}}{\sigma/\sqrt{n}} =
	1 - 1/\sqrt{k}
\]
This means for a 20\% reduction we need to multiply sample size by 1.5
(as compared with 0.6 correlation with adjustment) and to get 40\% reduction
need to almost triple the sample size.

\begin{figure}[H]
	\centering
	\includegraphics[width=0.3\linewidth]{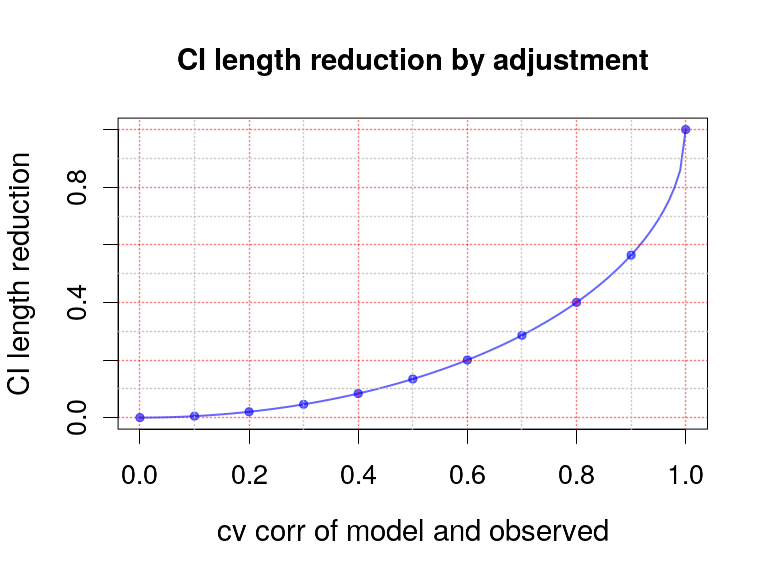}
	\includegraphics[width=0.3\linewidth]{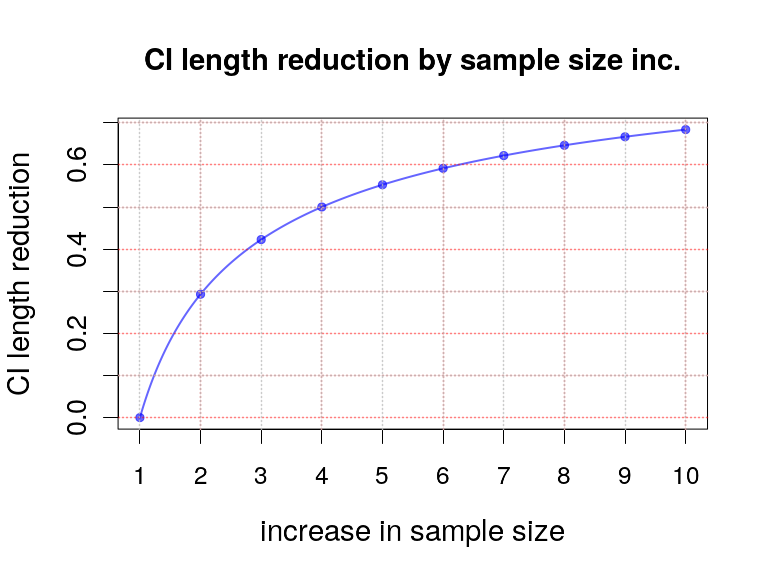}
	\includegraphics[width=0.3\linewidth]{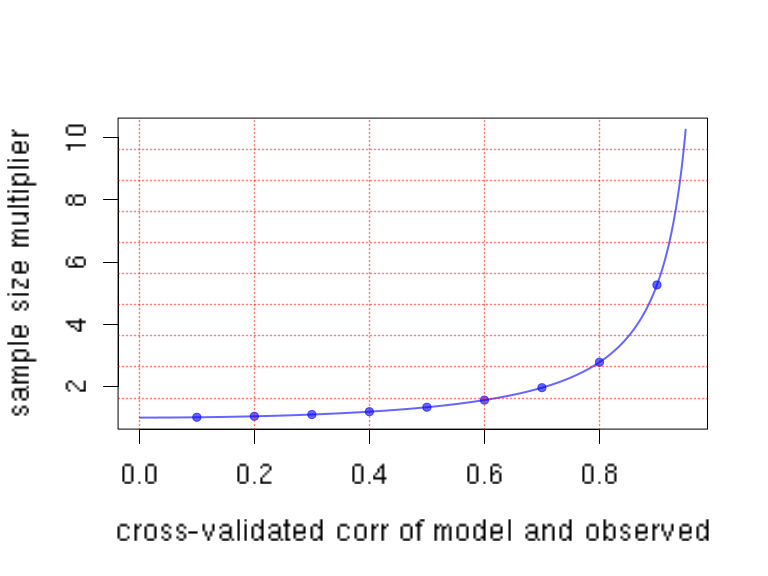}\
	\caption{Left: The reduction in CI (confidence intervals) length as a function
	of cross-validated correlation between predictions and observed. Middle:
	The reduction in CI length as a function of increase in sample size.
	Right: The practical sample size gain compared to the cross-validated correlation.}
	\label{fig:ci_reduct}
\end{figure}

\subsection{Adjusting other complex metrics}
\label{subsection-adj-complex-metrics}

In this subsection we discuss using these methods for other complex metrics
discussed at the end of Section \ref{sect:define-effect}.

First consider the sum ratio:
\[
	\hat{\tau} =
	\sum_{i \in t} Y_i / \sum_{i \in c}  Y_i =  (n_t/n_c) \ybart / \ybarc,
\]
and note that since we are assuming not all potential users appear in the sum,
$n_t$ and $n_c$ can be thought of random variables themselves
(if they are fixed variables in the experiment by design then we can simply
revert back to the mean ratio case).
Considering the second term in the above is simply the estimator for the mean
ratio, we can replace the second term by its adjusted version by multiplying the
estimator by  $[\fbart/\fbarc]^{\theta}$ which leaves the estimator
asymptotically unbiased. Therefore in summary we can use the exact same
adjustment as the mean ratio for the sum case. In the appendix we have performed
simulations for this case.

Next consider the ratio of ratios metrics. The estimator in that case in the log
scale is given by
\[\hat{\tau}_g =   g(\ybart) -  g(\zbart)   -  \big (g(\ybarc)  -  g(\zbarc) \big ) \]
\[=  g(\ybart) -  g(\ybarc)   -  \big ( g(\zbart) -  g(\zbarc) \big ).\]
For general $g$ the above expression could be considered as a {\it $g$-transformed difference
of difference} which has many applications.
For example when we compare the experiment and
treatment arms difference with a baseline.

Now we can adjust each of the two components on the right side individually
without introducing bias as
\[
	\hat{\tau}_{adj} =
	g(\ybart) - g(\ybarc)
	- \theta_Y (g(\fbaryt) - g(\fbaryc))
	- \Big ( g(\zbart) -  g(\zbarc)
	- \theta_Z (g(\fbarzt) - g(\fbarzc)) \Big ).
\]

This implies an adjusted estimator for $\tau$ is
\[
	(\ybart / \ybarc)(\zbarc / \zbart)
	[\fbaryc/\fbaryt)]^{\theta_Y}
	[\fbarzt/\fbarzc)]^{\theta_Z}.
\]

Note that we do not need to require $\theta_Y = \theta_Z$ of course and could
pick the individual $\theta$ which minimizes each term separately. However
that would not necessarily give us the global argmin. Indeed it is possible to find
the optimal values for this problem and much more general class of metrics
(using linear algebra) and we publish those results in future manuscripts.

\section{Simulations}
\label{sect:sim}

This section, performs a simulation study to test the variance reduction method
in Part (a) of Lemma \ref{lemma-unbiased} with $g=\log$ which is equivalent to the
Mean Ratio metric in which case the raw estimator is equal to
\[\hat{\tau} = (\sum_{i \in t} Y_i(1)/N_t) /  (\sum_{i \in c} Y_i(0)/N_c),\]
and the adjusted is equal to
\[\hat{\tau}_{adj} = \hat{\tau} \times
		{\color{\colorone} [(\sum_{i \in t} h(X_i, 0) / N_t)/(\sum_{i \in c} h(X_i, 0) / N_c)]^{\theta}}.\]
We examine the existence of bias in the new method and the amount of variance
reduction which is compared to the theoretical reduction approximated in Equation
Corollary \ref{corol-var-reduction}. More simulation studies are performed for
various other scenarios and the results are included in the appendix.
We generate a large population of users (1000,000) to play the role of the whole
(super) population and use smaller sample sizes to test the methods.

In this simulation, we assume a set of users are exposed to different versions of
the same feature. Each user may have a different  number of {\it impressions}
(denoted by $Imp$ in the following) to the feature. In each impression the user
has a chance to spend certain {\it amount} denoted by $A$) of time/money on the
feature. Also the user have a chance to either have an explicit interaction
(denoted by $Interact$) with the feature or not each time.

We first simulate a 100,000 population of users with random user attributes
(country and gender) and then for those users, we simulate impression counts
using these model assumptions:
\[Imp(u) \sim ZTP(\mu_1),\]
\[ \mu_1 =  \exp(X(u)\beta_1 + \theta_1 W(u) + \epsilon_1(u)),\]
where $W(u)=1$ if $u$ is in the treatment arm and zero otherwise; ZTP stands for
Zero-Truncated Poisson. The reason we use ZTP instead of Poisson is to avoid
having user imbalance in the predictors (country and gender here) the two arms
due to experiment which would then result in bias.

We use country and gender as our predictors here.
Figure \ref{fig:check-for-population-imbalance-v15} depicts the number of users for
each slice of the predictors (country/gender) and for each arm. At the population
level we observe approximate balance in how the users in each arm are distributed
across country and gender which means the appearance of users in the data from
various slices of the prediction variables is not impacted by the experiment as
desired.

Next conditional on the impression, we simulate interactions and amounts.
For amount
\[A(u) | Imp(u) \sim exp(\mu_2)\]
\[ \mu_2 =  \exp(X(u)\beta_2 + \theta_2 W(u) + \epsilon_2(u))  \]
and for the interaction:
\[Interact(u) | Imp(u) \sim Bernoulli(\mu_3)\]
\[ \mu_3 =  \logitinv(X(u) \beta_3 + \theta_3 W(u) + \epsilon_3(u)),\]
where $\logitinv(x) = \exp(x) / (1 + \exp(x))$. Also $\epsilon_1(u), \epsilon_2(u), \epsilon_3(u)$ model the user random effect
and assumed to be multivariate normal.

\begin{figure}[H]
	\centering
	\includegraphics[width=0.5\linewidth]{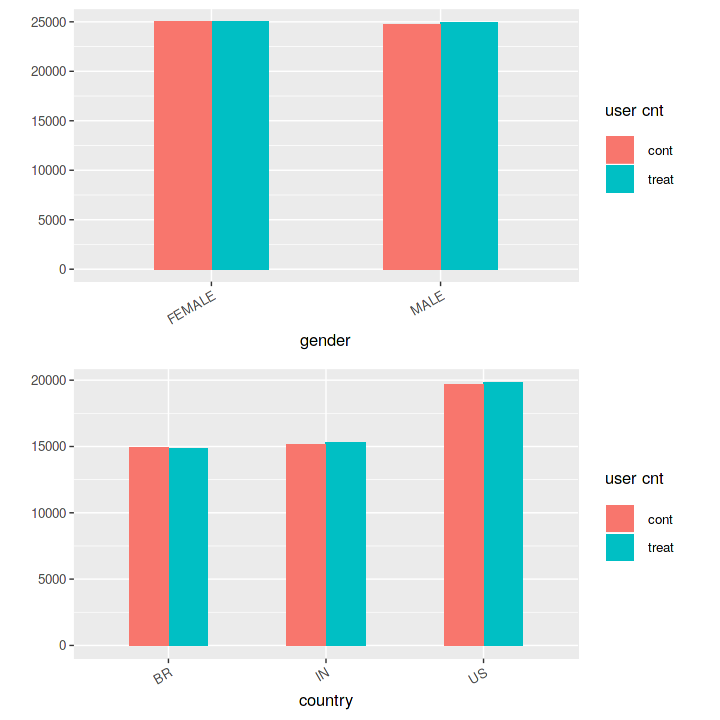}
	\caption{Check for population imbalance across predictors}
	\label{fig:check-for-population-imbalance-v15}
\end{figure}

We consider the three response variables: impression, amount and interaction
and the predictors: country and gender.
To test our method, we consider the Mean Ratio Metric for each response variable:
\[
	\tau =
	\E{Y(1)} / \E{Y(0)},
\]
defined over the (super) population of all units eligible and $t,c$ denote
treatment and control respectively.
Then assuming we have data from a random experiment, we can estimate $\tau$
by the following unbiased estimator which we call the raw method:
\[\mbox{raw estimate} = (\sum_{i \in t} y_i / n_t) / (\sum_{i \in c} y_i / n_c),\]
where $n_t, n_c$ denote the sample size on the treatment and the control arm
respectively.

Then to calculate adjusted estimators we use linear regression to fit a
predictive model for each response. We fit two regression models for each response:
\begin{itemize}
	\item[(i)] using control data only: the data is fitted only to the control arm data
	\item[(ii)] using all data (including treatment data): the data is fitted to
	both arms, using the experiment label as a predictor -- however when predicting,
	we assume all units are on control arm.
	\item[(iii)] using all data but dropping the experiment label as a predictor
\end{itemize}
and we use the multiplicative adjustment from Corollary \ref{corol-ratio}, Part (a):
\[\color{\colorone} a = (
		\sum_{i \in t} h(x_i; 0) / n_t) /
		(\sum_{i \in c} h)(x_i; 0) / n_c),
\]

where $h$ denotes the model prediction function and $x_i$ denotes the value of
the predictors for user $i$. Also note that the 0 in the
second component of $ h(x_i; 0)$ prescribe to the model to assume the user has
been on the control arm. This is automatically the case for
(i), but for two if we use the experiment arm as a categorical variable in the
model, when performing the interpolation we need to make sure to predict the
response for all users assuming they were on the control arm. The results from
these two models are given Tables
	\ref{pred_accuracy_infoDf_contDataOnly_v15},
	\ref{pred_accuracy_infoDf_withTreatData_v15} and
	\ref{pred_accuracy_infoDf_allDataNoExptId_v15}
respectively. The
cross-validated correlation for each response is given in the column $cv\_cor$,{}
the optimal $\theta$
 and the theoretical reduction in the variance are also given. There is little
 difference between the model performance in this example and we expect both
 adjustment methods produce similar reduction.

\begin{table}[ht]
\centering
\begin{tabular}{|r|l|r|r|r|}
  \hline
 & model\_formula & cv\_cor & theta & sd\_ratio \\
  \hline
1 & imp\_count: gender+country & 0.67 & 0.97 & 0.74 \\
  2 & obs\_interact: gender+country & 0.75 & 1.00 & 0.66 \\
  3 & obs\_amount: gender+country & 0.72 & 1.00 & 0.69 \\
   \hline
\end{tabular}
\caption{Prediction accuracy for various responses when using control data only.} 
\label{pred_accuracy_infoDf_contDataOnly_v15}
\end{table}

\begin{table}[ht]
\centering
\begin{tabular}{|r|l|r|r|r|}
  \hline
 & model\_formula & cv\_cor & theta & sd\_ratio \\
  \hline
1 & imp\_count: gender+country+expt\_id & 0.69 & 0.99 & 0.72 \\
  2 & obs\_interact: gender+country+expt\_id & 0.74 & 1.00 & 0.67 \\
  3 & obs\_amount: gender+country+expt\_id & 0.66 & 1.00 & 0.75 \\
   \hline
\end{tabular}
\caption{Prediction accuracy for various responses when using all the data in building the model and the experiment label. However we assume all data is on control arm when predicting.} 
\label{pred_accuracy_infoDf_withTreatData_v15}
\end{table}

\begin{table}[ht]
\centering
\begin{tabular}{|r|l|r|r|r|}
  \hline
 & model\_formula & cv\_cor & theta & sd\_ratio \\
  \hline
1 & imp\_count: gender+country & 0.65 & 1.00 & 0.76 \\
  2 & obs\_interact: gender+country & 0.69 & 1.00 & 0.72 \\ 
  3 & obs\_amount: gender+country & 0.62 & 1.00 & 0.78 \\
   \hline
\end{tabular}
\caption{Prediction accuracy for various responses when using all the data in building the model but not using the experiment label.}
\label{pred_accuracy_infoDf_allDataNoExptId_v15}
\end{table}

Figure \ref{fig:check_for_imbalance_v15} checks if the adjustment
(in the log scale) is on average unbiased.
To that end, we sample 500 users from all the data randomly multiple times and
calculate the adjustment in each case.
We expect to see a distribution which is symmetric around zero in the absence
of bias. We observe that in all cases the
adjustment is unbiased.

\begin{figure}[H]
	\centering
	\includegraphics[width=0.8\linewidth]{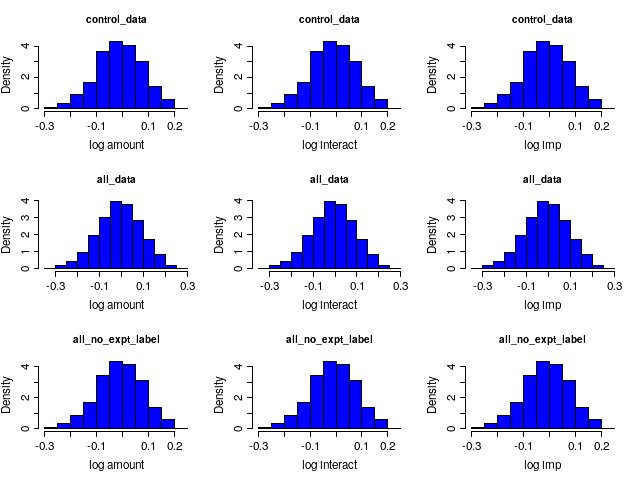}
	\caption{Check for imbalance in the adjustment when $g=\log$.
	The top panels are using the control data only. The middle plots use all
	the data and use the experiment label in training.
	The bottom plots are using all data but do not use the experiment label.}
	\label{fig:check_for_imbalance_v15}
\end{figure}

Next we vary the sample size of users from 500 to 20,000 and for each sample size,
we sample from the population 1000 times to calculate the estimators 1000 times
for both raw and adjusted cases.
Figure \ref{fig:mean_ratio_v15} depicts the mean of the estimator for the three
responses on the left panels and the variance on the right panels.
We only show adjustment methods based on (i) and (ii) as (iii) was very similar to (ii).
It is clear that both adjustment methods leave the estimator unbiased while reducing the
variance significantly regardless of the sample size.
Figure \ref{fig:mean_ratio_sd_comparison_v15} depicts the ratio of the standard
deviation adjusted estimators to the  raw estimator which is approximated to be
\[\mbox{ratio} =
	\sd(\mbox{adj estimator}) /
	\sd(\mbox{raw estimator}) \approx \sqrt{1 - \rho^2},
\]
using Theorem \ref{theo-var-g} and its corollaries. For $\rho \approx 0.70$ this
ratio is approximately 0.71 which is what we observe in the figure.
Note that this ratio does not depend on the sample size according to the theory
and the figure confirms that for this case. Other simulation results for varying
correlation values (presented in the appendix) were also consistent with the theory.

\begin{figure}[H]
	\centering
	\includegraphics[width=0.4\linewidth]{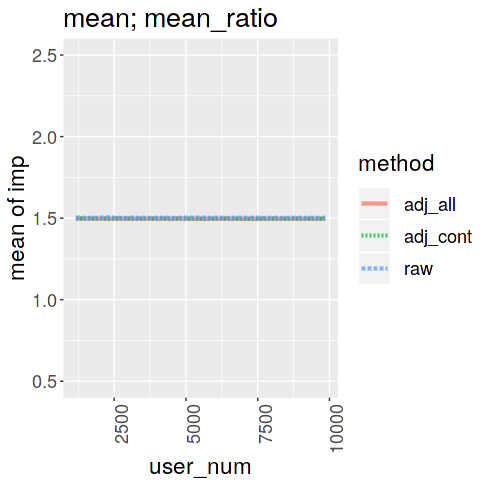}
	\includegraphics[width=0.4\linewidth]{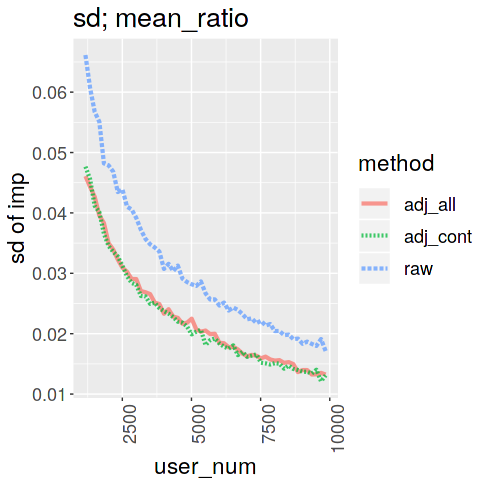}

	\includegraphics[width=0.48\linewidth]{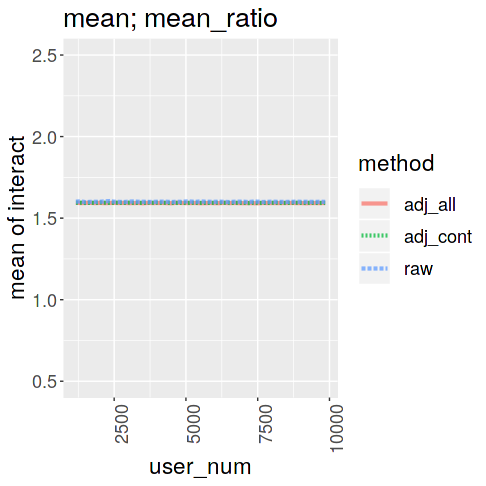}
	\includegraphics[width=0.48\linewidth]{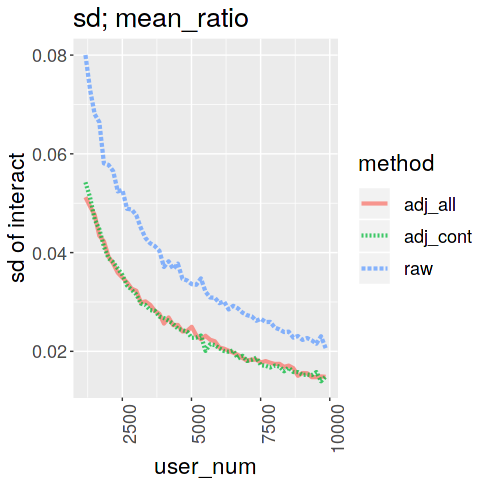}

	\includegraphics[width=0.4\linewidth]{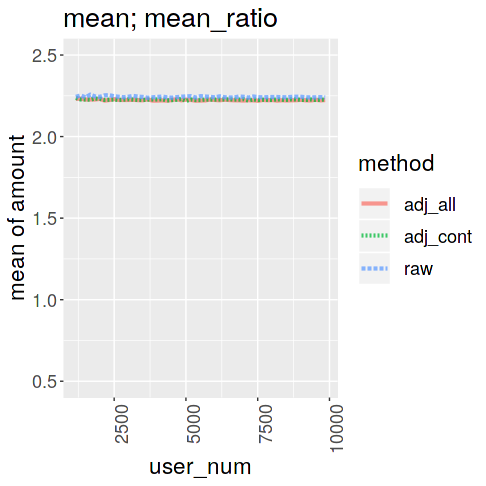}
	\includegraphics[width=0.4\linewidth]{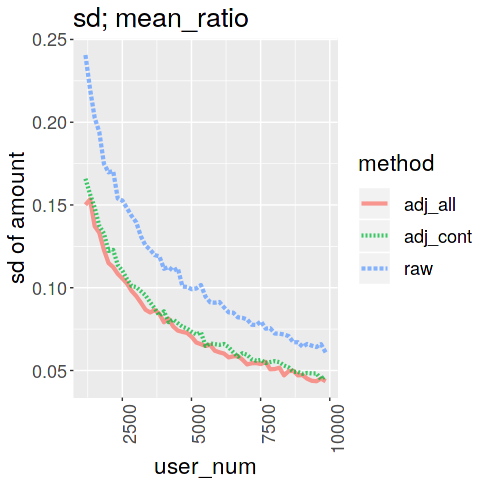}
	\caption{Mean Ratio Metric estimator's mean and variance across sample size.}
	\label{fig:mean_ratio_v15}
\end{figure}

\begin{figure}[H]
	\centering
	\includegraphics[width=0.8\linewidth]{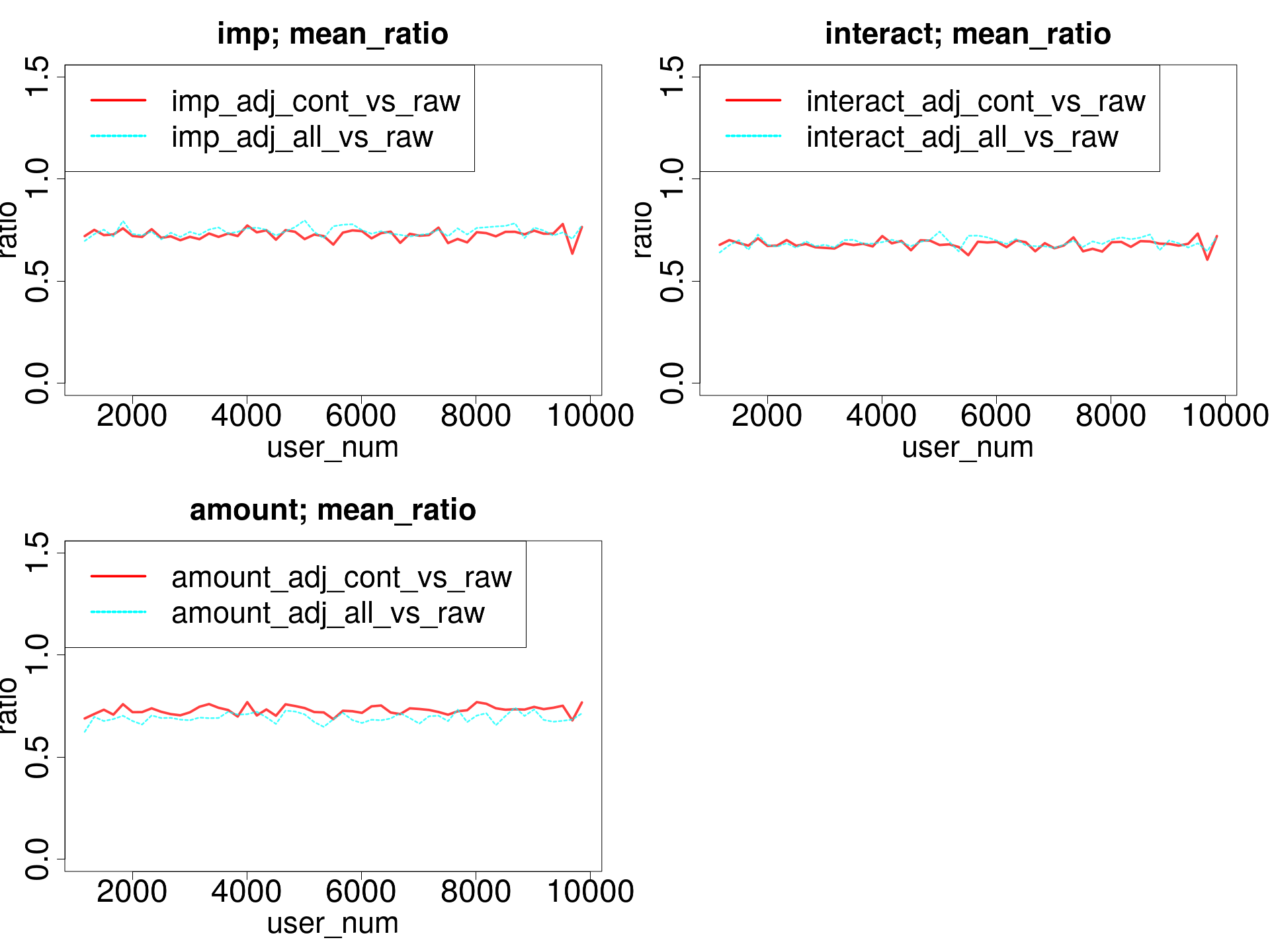}
	\caption{The ratio of standard deviations of adjusted and raw estimators for
	Mean Ratio.}
	\label{fig:mean_ratio_sd_comparison_v15}
\end{figure}

Finally, Figure \ref{fig:mean_ratio_ci_convg_comparison_v15} varies the sample
size from 500 to 10000. For each sample size, we sample from the population once,
and for that one sample we calculate a CI using the bucketed JackKnife method
with 50 buckets. The bucketed jack-knife method is similar to regular jack knife
method but takes out one bucket for each calculation, rather than one unit.
We observe that the adjusted confidence intervals are within
the raw confidence intervals most of the time, while all converging toward a
common value.

\begin{figure}[H]
	\centering
	\includegraphics[width=0.999\linewidth]{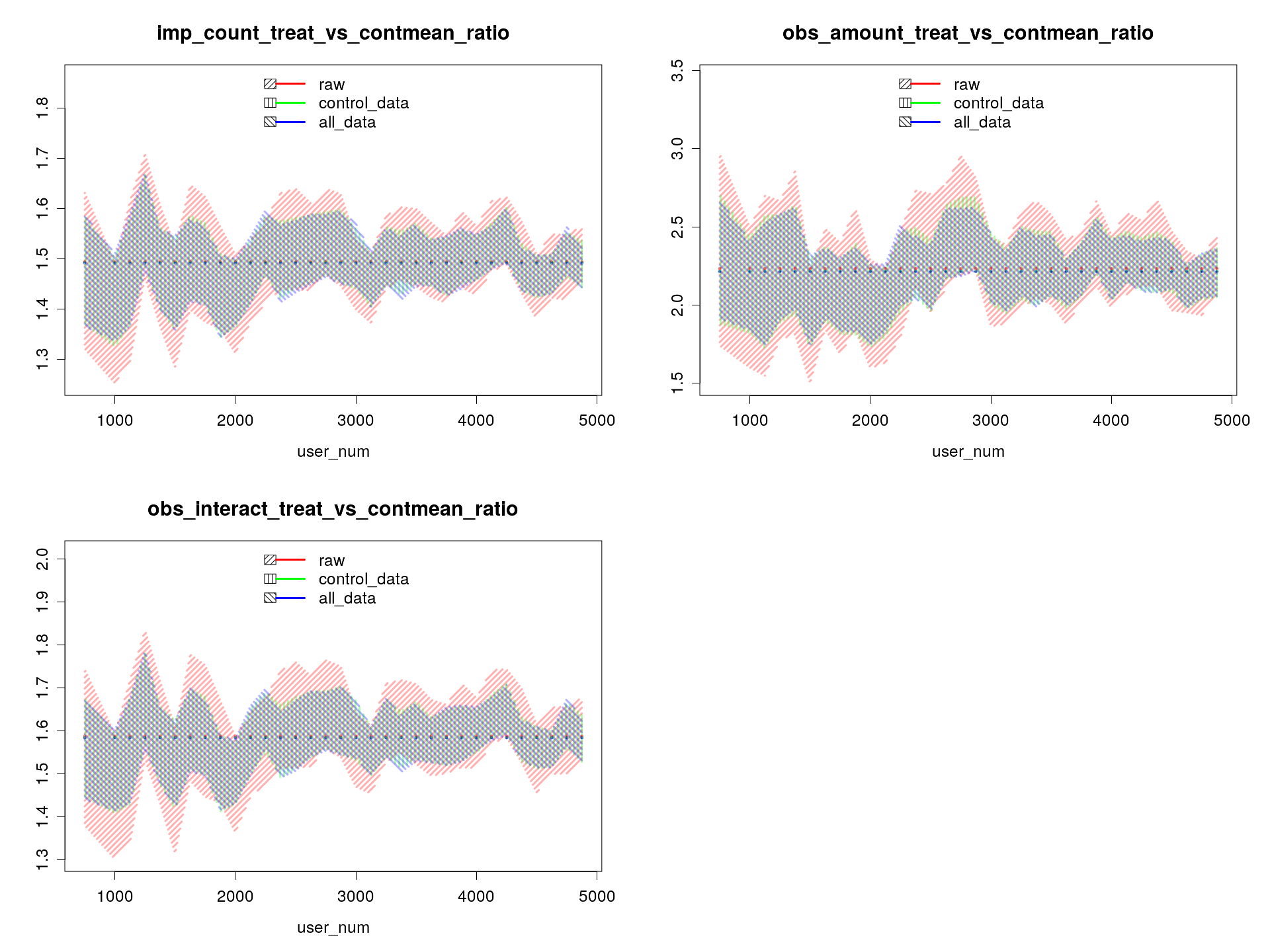}
	\caption{Comparing the CI length convergence across varying sample sizes.}
	\label{fig:mean_ratio_ci_convg_comparison_v15}
\end{figure}

\section{Discussion}
\label{section:discussion}

This paper developed a method to decrease the variance of a wide class of
experiment effects metrics while leaving the estimator (asymptotically) unbiased.

Our goal was to keep the method very flexible in two ways. (1) The method is applicable
to a wide variety of metrics. (2) The interpolation/prediction function used in the
method can be quite arbitrary i.e. any machine learning algorithm could be used for that purpose.
The second requirement is a useful one in the context of technology industry at the moment where
many powerful complex machine learning algorithms have been developed. However their
statistical properties are almost a black-box to the practitioners.
Given any such predictive model, we introduced adjusted estimators
which remain asymptotically unbiased.
The adjusted estimator also includes extra parameters which can be optimized for a
given predictive model.

We think these methods can have other applications beyond experiment analysis.
For example we can use these adjustments also
in the context of observational studies to estimate casual effects since
the adjustments perform balancing with respect to the prescribed
auxiliary variables.


\section*{Acknowledgement}
The first author (Reza Hosseini) would like to thank Jonathan Hennessy (Google)
for numerous fruitful discussions and for going over the first draft.

\appendix

\section{Proofs}
\label{appendix:proofs}

\begin{proof} (Proof of Theorem \ref{theo-var-g})
	\begin{itemize}
		\item[(a)]
			Define $f(H, Y; \theta) = g(Y) - \theta g(H)$. Also assume that the
			bivariate distribution of $(H, Y)$ has the mean $\mu=(\mu_H, \mu_Y)$.
			Then by applying the Taylor series approximation to f (see \cite{dasgupta-2008}):
			\[
				f(H, Y) =
					f(\mu) + \nabla (f)(\mu).[(H, Y) - \mu] =
					f(\mu) + \pd{f}{h}(\mu) (H - \mu_H) + \pd{f}{y}(\mu) (Y - \mu_Y).
			\]
			We can approximate the variance of $f(H, Y)$ by
			\[\var{f(H, Y; \theta)} \approx \pd{f}{h}(\mu)^2 \var{H} - 2 \pd{f}{h}(\mu) \pd{f}{y}(\mu) \cov(Y, H) + \pd{f}{y}(\mu)^2 \var{Y}.\]
			By replacing
			\[\pd{f}{h}(\mu) = -\theta g'(\mu_H), \; \pd{f}{y}(\mu) = g'(\mu_Y),\]
			we get:
			\[\var{f(H, Y; \theta)} = \theta^2 (g'(\mu_H) \sigma_H)^2 + (-2 g'(\mu_H) g'(\mu_Y)cov(Y, H))\theta + (g'(\mu_Y) \sigma_Y)^2.\]
			The above expression is a 2nd-order convex polynomial of the form $p(\theta) = a\theta^2 + b \theta + c$
			which is minimized at $-b / (2a)$ with minimum being $-b^2/4a + c$. Therefore
			\[\argmin_{\theta} f(X, H; \theta) = \frac{g'(\mu_Y)}{g'(\mu_H)} \frac{\cov(Y, H)}{\sigma_H^2}\]

      \item[(b)] The minimum can be obtained by replacing $\theta$ from (a).
      For the second part, note that by Taylor series approximation we have
      \[\var{g(Y)} = g'(\mu_Y) \var{Y},\] and the result follows.

      \end{itemize}

\end{proof}

\begin{proof} (Proof of Theorem \ref{theo-var-g2})

	To prove (a) and (b) define
	\[f(Y, H, \ystar, \hstar; \theta) = g(Y) - \theta g(H) - \big ( g(\ystar) - \theta g(\hstar) \big ) \]
	Then after by Taylor series expansion, we can approximate $f$
	\[f(Y, H, \ystar, \hstar; \theta) \approx f(\mu) + \nabla (f)(\mu).[(Y, H, \ystar, \hstar) - \mu].\]
	Therefore
	\begin{align*}
	\var{T} & \approx \sum_{U \in \{Y, H, \ystar, \hstar\}}  \pd{f}{u}(\mu)^2 \var{U} -
	2 \pd{f}{h}(\mu)\pd{f}{y}(\mu)\covyh - 2 \pd{f}{h^\star}(\mu)\pd{f}{y^\star}(\mu)\covystarhstar & \\
	 & = a \theta^2 + b\theta + c, &
	\end{align*}
	where,
	\begin{align*}
		a &= g'(\muh)^2 \sigma_H^2 + g'(\muhstar)^2 \sigmahstar^2 &  \\
		b &=  -2 \big ( g'(\muh)g'(\muy)\covyh + g'(\muhstar)g'(\muystar)\covystarhstar) \big ) & \\
		c &=  g'(\muy)^2 \sigmay^2 + g'(\muystar)^2 \sigmaystar^2.
	\end{align*}
	This is a quadratic function of $\theta$ for which the argmin is equal to
	$-b/(2a)$ and the minimum is equal to $-b^2/(4a) + c$ which gives (a) and (b).\\
	To prove (c), note that if we let
	\begin{align*}
		a_1 & = g'(\muh)g'(\muy) \cov(Y, H) & \\
		a_2 & = g'(\muhstar)g'(\muystar) \cov(\ystar, \hstar) & \\
		b_1 & = g'(\muh)^2 \sigmah^2 & \\
		b_2 & = g'(\muhstar)^2 \sigmahstar^2 &
	\end{align*}
	Then in (a) we showed $\argmin_{\theta} \var{T} \approx \frac{a_1 + a_2}{b_1 + b_2}$
	and from Theorem \ref{theo-var-g}
	we have $\theta_i = a_i / b_i,\;i=1,2$. The proof is complete by noting
	\[\min \{\frac{a_1}{b_1}, \frac{a_2}{b_2}\} \leq \frac{a_1 + a_2}{b_1 + b_2} \leq \max \{\frac{a_1}{b_1}, \frac{a_2}{b_2}\}\]

	To prove (d) note that by the assumptions given:
	 \[g'(\muh) \sigmah = g'(\muhstar) \sigmahstar \mbox{ and } \rho g'(\muy)\sigmay = \rho g'(\muystar)\sigmaystar,\]
	we conclude
	$a_1 = a_2$ and $b_1 = b_2$. This allows us to rewrite $\delta$ as
	$\delta = \frac{(a_1 + a_2)^2}{b_1 + b_2} = \frac{a_1^2}{b_1} + \frac{a_2^2}{b_2}.$
	By Part (b), we have
	\begin{align*}
		\var{T} & = g'(\muy)^2 \sigmay^2 + g'(\muystar)^2 \sigmaystar^2 - (a_1 + a_2)^2/(b_1 + b_2) & \\
		& =  g'(\muy)^2 \sigmay^2 + g'(\muystar)^2 \sigmaystar^2 - a_1^2/b_1 - a_2^2/b_2 & \\
		& = g'(\muy)^2 \sigmay^2  - \frac{a_1^2}{b_1} +  g'(\muystar)^2 \sigmaystar^2  - \frac{a_2^2}{b_2} & \\
		& = g'(\muy)^2 \sigmay^2 - \rho^2 g'(\muy)^2 \sigmay^2 + g'(\muystar)^2 \sigmaystar^2 - {\rhostar}^2 g'(\muystar)^2 \sigmaystar^2 & \\
		& = g'(\muy)^2 \sigmay^2 (1 - \rho^2) + g'(\muystar)^2 \sigmaystar^2 (1 - {\rhostar}^2) =  (1-\rho^2) \var{g(Y)} + (1-{\rhostar}^2) \var{g(\ystar)} & \\
		& \leq (1 - \min\{\rho^2, {\rhostar}^2\})  \var{g(Y)} +  (1 - \min\{\rho^2, {\rhostar}^2\}) \var{g(\ystar)} & \\
		& =  (1 - \min\{\rho^2, {\rhostar}^2\})  \big ( \var{g(Y)} + \var{g(\ystar)} \big ) = (1 - \min\{\rho^2, {\rhostar}^2\}) \var{T_0}.
	\end{align*}

\end{proof}

\begin{proof} (Proof of Corollary \ref{corol-var-reduction})

The adjusted estimator is equal to
\begin{align}
	\taua &= \hat{\tau}_g - {\color{\colorone}\theta \Big ( g(\fbart) - g(\fbarc) \Big )} \label{eqn-bias-form}\\
	& = g(\ybart) - g(\ybarc) - {\color{\colorone}\theta \Big ( g(\fbart) - g(\fbarc) \Big ) } \nonumber \\
	& = \Big( g(\ybart)  - {\color{\colorone}\theta (g(\fbart)} \Big) -
	\Big( g(\ybarc) -  {\color{\colorone} \theta g(\fbarc)} \Big) \label{eqn-var-form}
 \end{align}
The form in Equation \ref{eqn-bias-form} was convenient for showing unbiasedness.
And we use the form in Equation \ref{eqn-var-form} for deriving the variance.
Now we can use Theorem \ref{theo-var-g2} by letting
$Y = \bar{Y}_t,\;\ystar=\bar{Y}_c,\; H = \fbart,\;\hstar = \fbarc$, and
also noting that
\[\rho_{c} = \cor(Y_c, H_c) = \cor(\bar{Y}_c,  \bar{H}_c)\;\; \rho_t = \cor(\bar{Y}_t, \bar{H}_t)\]

\end{proof}

\section{Simulations for various scenarios}
\label{appendix:sims}

Here we perform a few more simulation studies for various scenarios.

\subsection{Simulation results for Sum Ratio}
\label{appendix:sum-ratio}

This subsection presents the results for the Sum Ratio Metric for the same
simulated data in Tables
  \ref{pred_accuracy_infoDf_contDataOnly_v15},
	\ref{pred_accuracy_infoDf_withTreatData_v15} and
	\ref{pred_accuracy_infoDf_allDataNoExptId_v15} (in the main text).

Figure \ref{fig:sum_ratio_v15} depicts the mean and the standard deviation of
the raw and adjusted estimators for Sum Ratio. Figure
\ref{fig:sum_ratio_sd_comparison_v15} depicts the ratio of the standard deviation
of the adjusted estimators versus the raw estimator.
We observe that the estimators remain unbiased while the variance is decreased,
albeit the decrease in the variance is less than the decrease for the Mean Ratio
case.

\begin{figure}[H]
	\centering
	\includegraphics[width=0.4\linewidth]{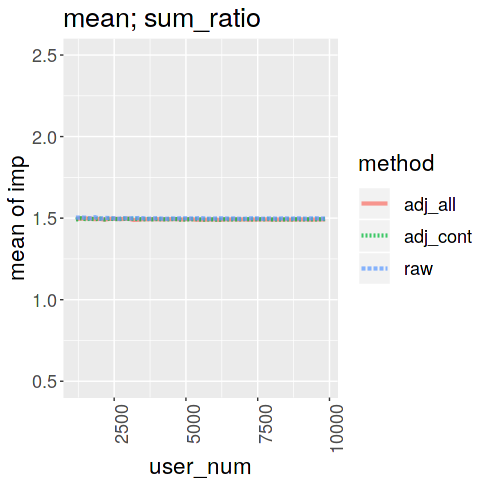}
	\includegraphics[width=0.4\linewidth]{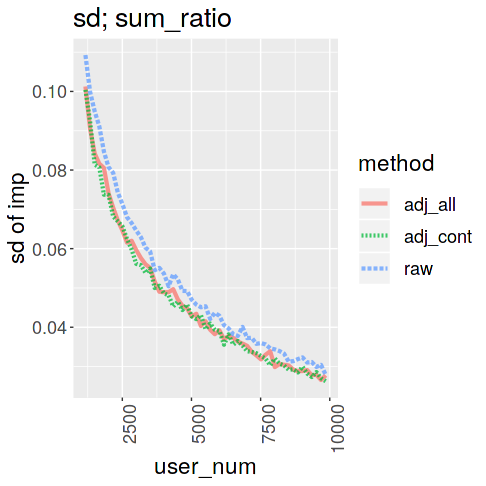}

	\includegraphics[width=0.4\linewidth]{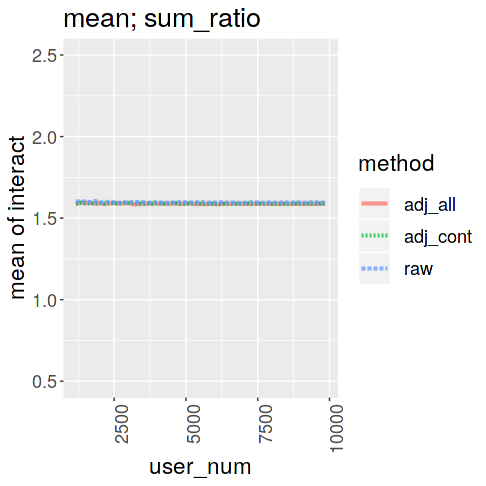}
	\includegraphics[width=0.4\linewidth]{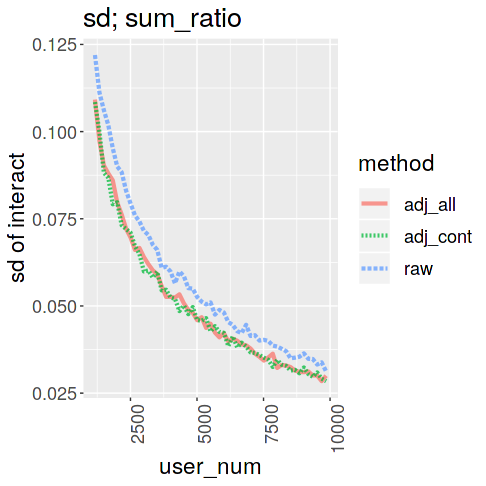}

	\includegraphics[width=0.4\linewidth]{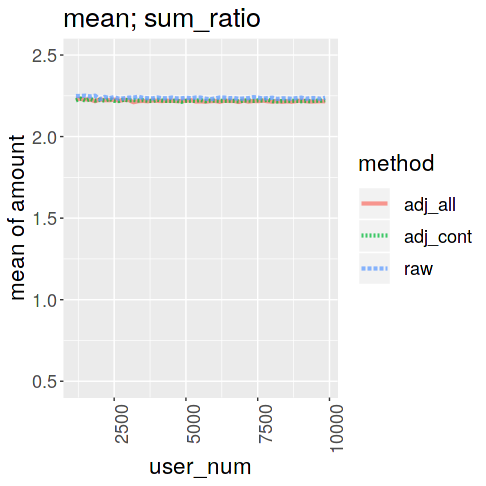}
	\includegraphics[width=0.4\linewidth]{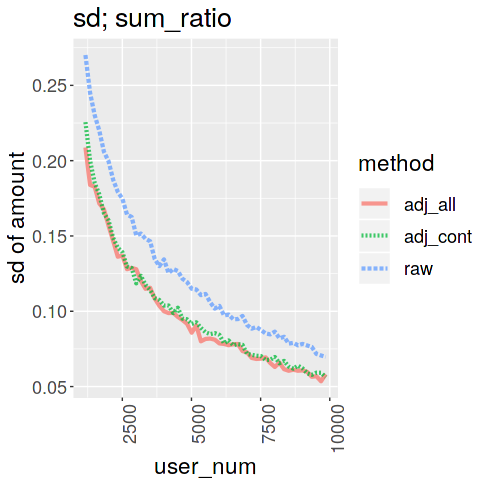}
	\caption{Sum Ratio Metric estimator's mean and variance across sample size.}
	\label{fig:sum_ratio_v15}
\end{figure}

\begin{figure}[H]
	\centering
	\includegraphics[width=0.8\linewidth]{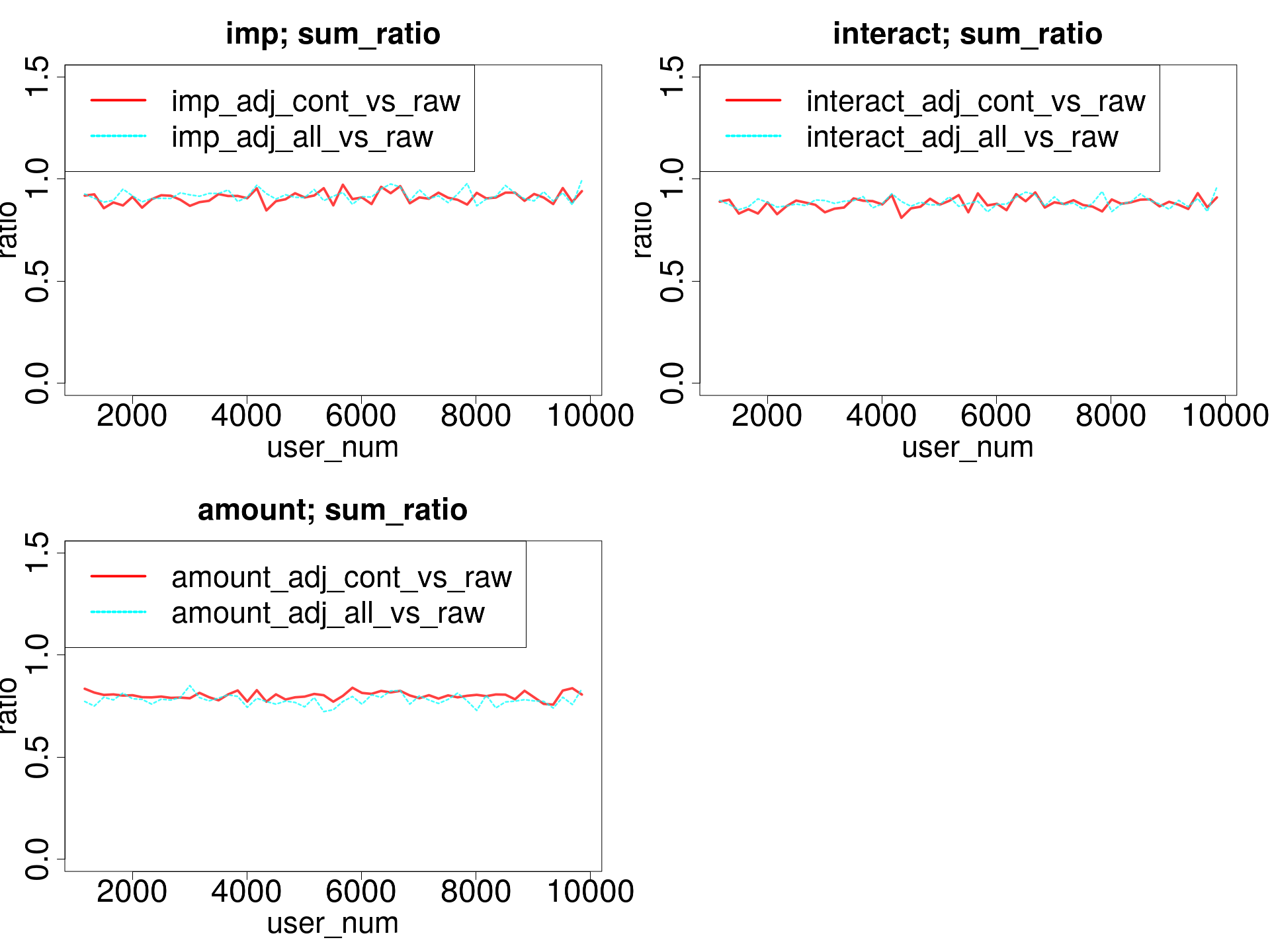}
	\caption{The ratio of standard deviations of adjusted and raw estimators for Sum Ratio.}
	\label{fig:sum_ratio_sd_comparison_v15}
\end{figure}

\subsection{Simulation with no predictive power}
\label{appendix:no-predictive-power}

This subsection considers a simulation in which the auxiliary variables do not
have any prediction power.
It is desirable if the methods we introduced in this paper do not increase the
variance significantly, which might be the case due to the extra variance
introduced by the model uncertainty which generates some variance in the
adjustment. Tables \ref{pred_accuracy_infoDf_contDataOnly_v18} and
\ref{pred_accuracy_infoDf_withTreatData_v18} show the predictive power of the
models for this case and the expected reduction in the variance. Figure
\ref{fig:mean_ratio_v18} depicts the mean and the standard deviation of the raw
and adjusted estimators.
Figure \ref{fig:mean_ratio_sd_comparison_v18} depict the ratio of the standard
deviation of the adjusted estimators versus the raw estimator.
Fortunately we cannot observe any increase in the variance using the adjusted
methods.

\begin{table}[ht]
\centering
\begin{tabular}{|r|l|r|r|r|}
  \hline
 & model\_formula & cv\_cor & theta & sd\_ratio \\
  \hline
1 & imp\_count: gender+country & 0.03 & 0.70 & 1.00 \\
  2 & obs\_interact: gender+country & -0.03 & 0.00 & 1.00 \\
  3 & obs\_amount: gender+country & -0.04 & 0.00 & 1.00 \\
   \hline
\end{tabular}
\caption{Prediction accuracy for various responses using only control data.} 
\label{pred_accuracy_infoDf_contDataOnly_v18}
\end{table}

\begin{table}[ht]
\centering
\begin{tabular}{|r|l|r|r|r|}
  \hline
 & model\_formula & cv\_cor & theta & sd\_ratio \\
  \hline
1 & imp\_count: gender+country+expt\_id & 0.16 & 1.00 & 0.99 \\
  2 & obs\_interact: gender+country+expt\_id & 0.14 & 0.67 & 0.99 \\
  3 & obs\_amount: gender+country+expt\_id & 0.19 & 0.91 & 0.98 \\
   \hline
\end{tabular}
\caption{Prediction accuracy for various responses when using all the data in building the model and the experiment label. However we assume all data is on control arm when predicting.}
\label{pred_accuracy_infoDf_withTreatData_v18}
\end{table}

\begin{figure}[H]
	\centering
	\includegraphics[width=0.4\linewidth]{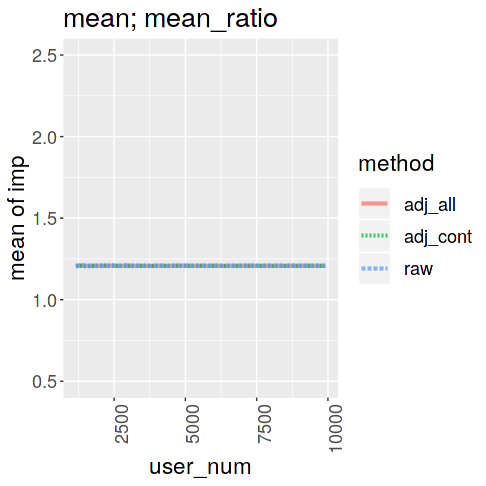}
	\includegraphics[width=0.4\linewidth]{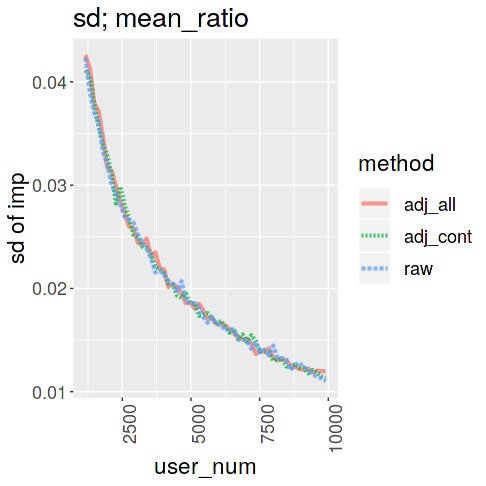}

	\includegraphics[width=0.4\linewidth]{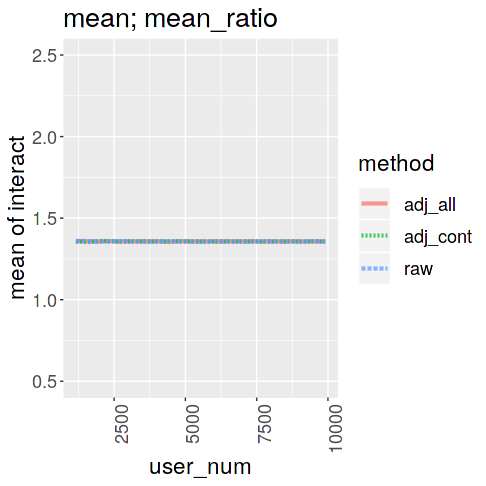}
	\includegraphics[width=0.4\linewidth]{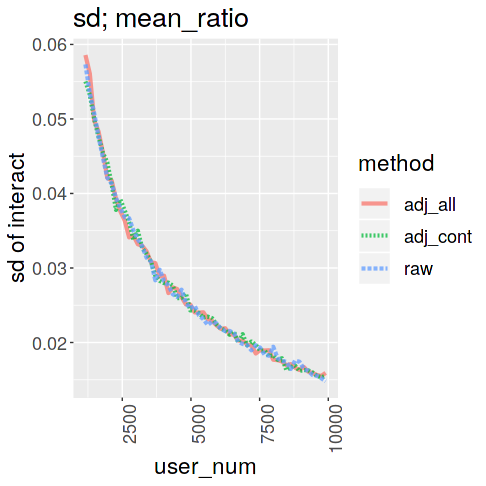}

	\includegraphics[width=0.4\linewidth]{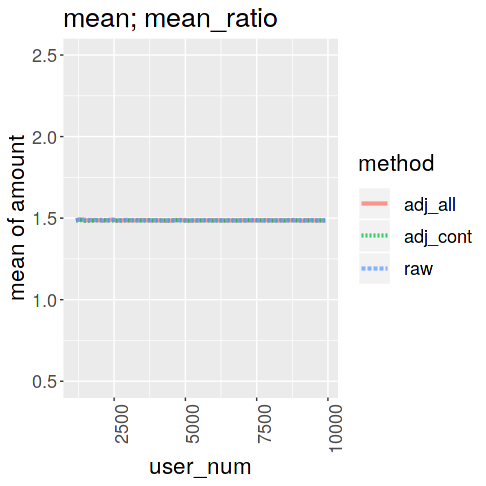}
	\includegraphics[width=0.4\linewidth]{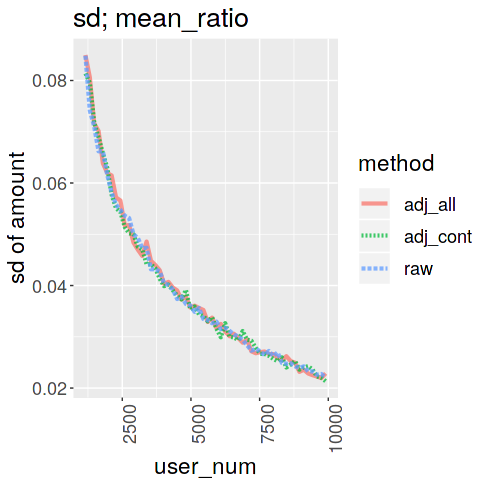}
	\caption{Mean Ratio Metric estimator's mean and variance across sample size.
	This is for the scenario with no predictive power.}
	\label{fig:mean_ratio_v18}
\end{figure}

\begin{figure}[H]
	\centering
	\includegraphics[width=0.8\linewidth]{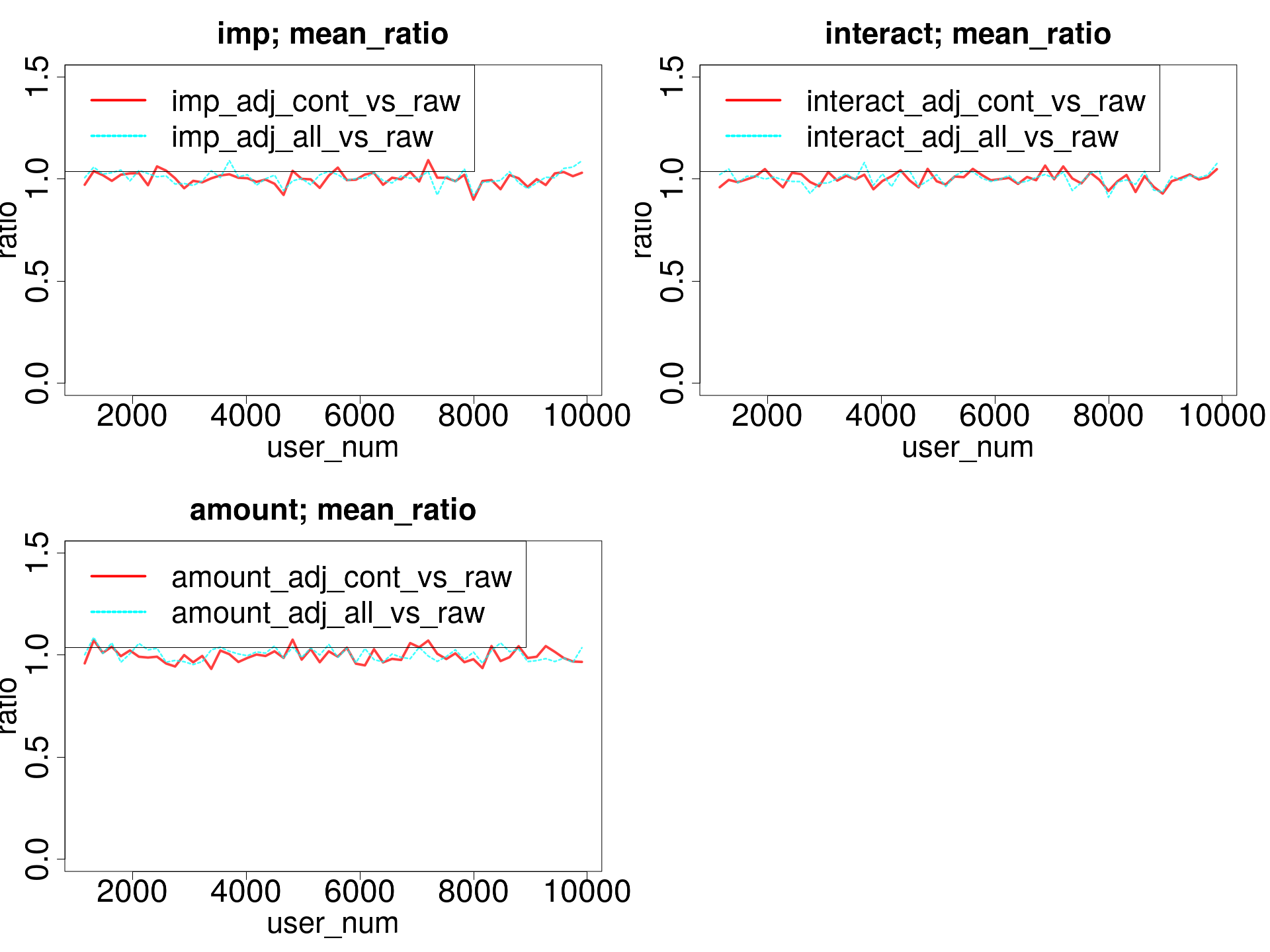}
	\caption{The ratio of standard deviations of adjusted and raw estimators for
		Mean Ratio. This is for the scenario with no predictive power.}
	\label{fig:mean_ratio_sd_comparison_v18}
\end{figure}

\begin{figure}[H]
	\centering
	\includegraphics[width=0.4\linewidth]{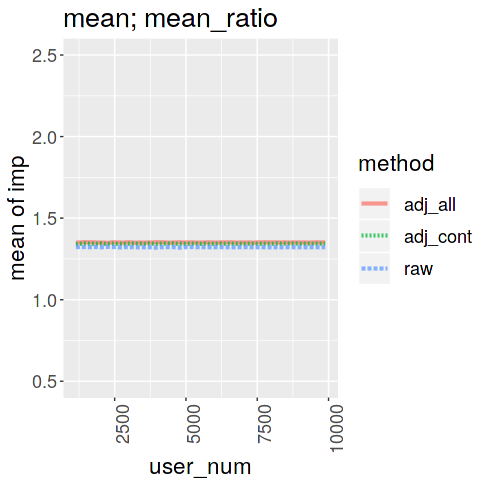}
	\includegraphics[width=0.4\linewidth]{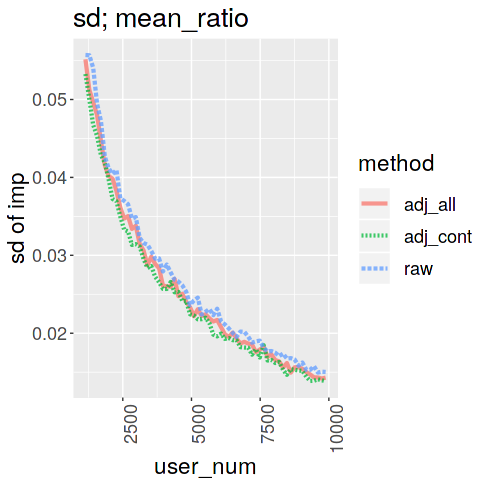}

	\includegraphics[width=0.4\linewidth]{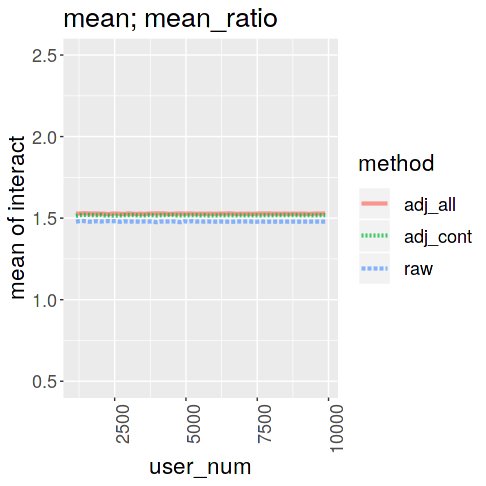}
	\includegraphics[width=0.4\linewidth]{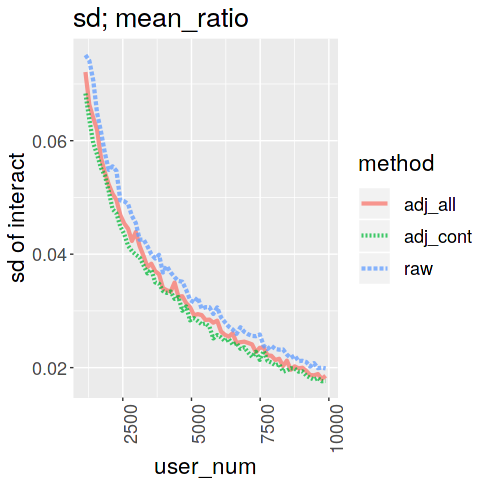}

	\includegraphics[width=0.4\linewidth]{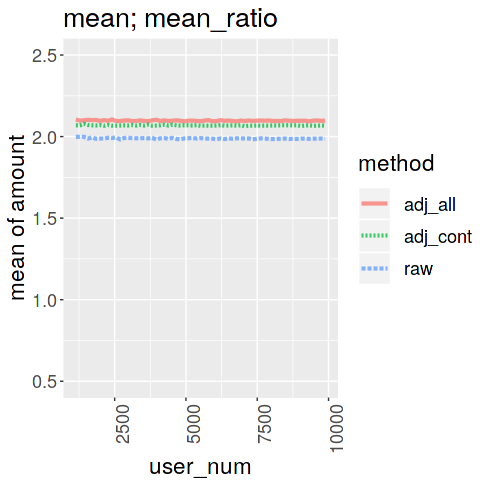}
	\includegraphics[width=0.4\linewidth]{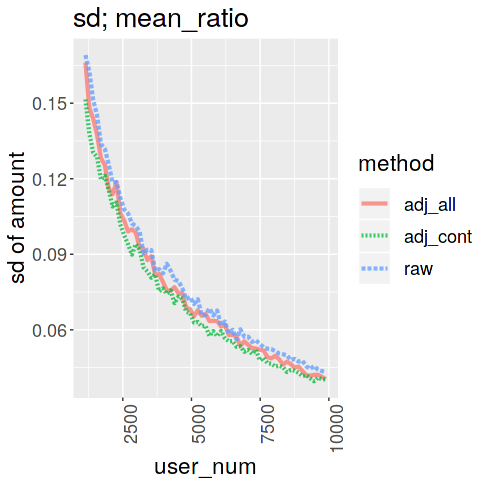}
	\caption{Mean Ratio Metric estimator's mean and variance across sample size.
	This is for the scenario with experiment impacting the user populations
	appearing in the data.}
	\label{fig:mean_ratio_v19}
\end{figure}

\begin{figure}[H]
	\centering
	\includegraphics[width=0.999\linewidth]{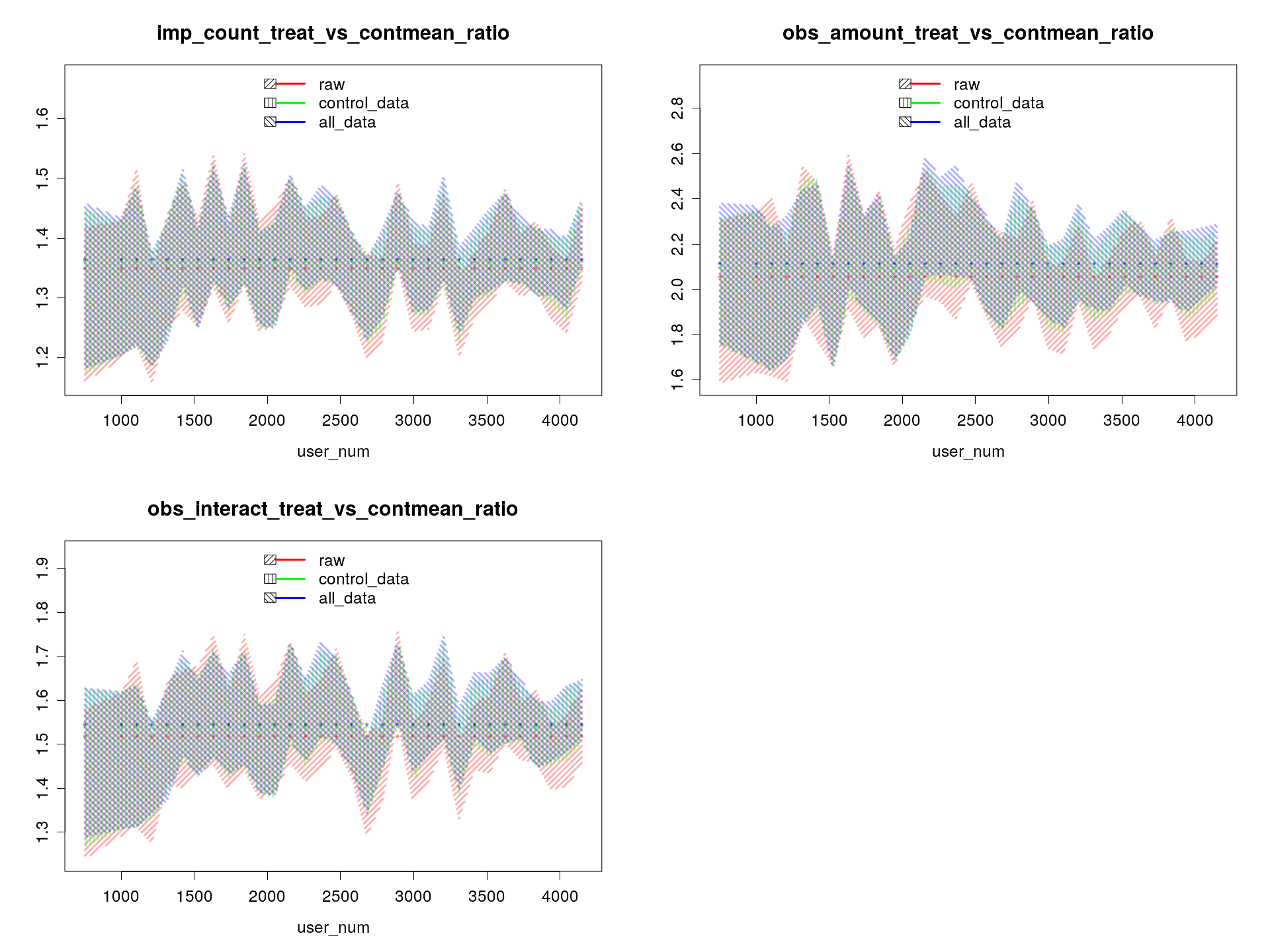}
	\caption{Comparing the CI length convergence across varying sample sizes.}
	\label{fig:mean_ratio_ci_convg_comparison_v19}
\end{figure}

\subsection{Simulation results for Ratio of Mean Ratios}
\label{appendix:ratio-of-ratio}
We use the same simulation settings as the simulation in the main text here.
The result is given in Figure \ref{fig:ratio_of_mean_ratios_ci_convg_comparison_v15}
which show the adjusted estimator is unbiased while decreasing the variance.
\begin{figure}[H]
	\centering
	\includegraphics[width=0.6\linewidth]{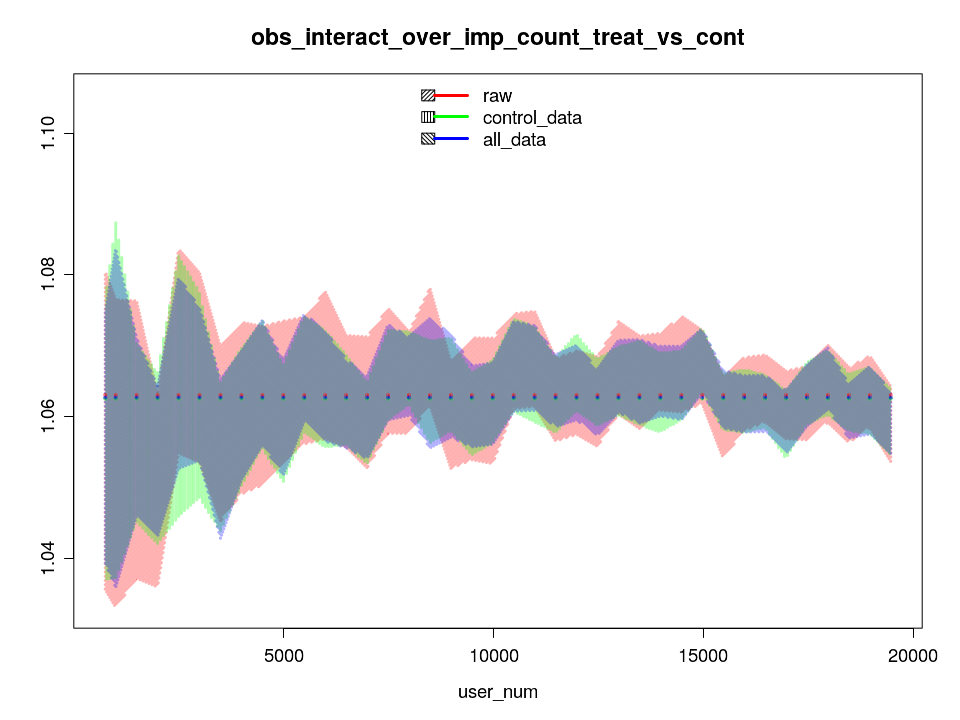}
	\caption{The ratio of mean ratios.}
	\label{fig:ratio_of_mean_ratios_ci_convg_comparison_v15}
\end{figure}

\end{document}